\documentclass[a4paper,10pt]{article}
\usepackage[utf8]{inputenc}
\usepackage[T1]{fontenc}
\usepackage[french,english]{babel}
\usepackage{amssymb}
\usepackage{enumerate}
\usepackage{amsthm}
\usepackage{amsmath}
\usepackage{longtable}
\usepackage{caption}
\usepackage{relsize}
\usepackage{xcolor}
\usepackage{enumitem}
\usepackage[document]{ragged2e}
\usepackage{hyperref}
\usepackage{geometry}
\usepackage{bm}
\usepackage{bbm}
\usepackage[capitalize]{cleveref}
\usepackage{thmtools}
\usepackage{cases}
\usepackage{libertine}

\usepackage[title]{appendix}
\usepackage[backend=bibtex,
            giveninits=true,
            style=numeric,
            doi=false,
            isbn=false,
            url=false,
            eprint=false,
            maxbibnames=99
            ]{biblatex}
\addbibresource{approximation_stable_solutions.bib}

\usepackage[title]{appendix}

\hypersetup{
    colorlinks=true,
    linkcolor=[rgb]{0.2, 0, 0.7},
    filecolor=magenta,
    urlcolor=cyan,
    bookmarks=true,
    citecolor=[rgb]{0,0.4,0},
}

\crefformat{equation}{#2\normalfont{{(\textup{#1})}}#3}


\makeatletter
\newcommand{\subjclass}[2][1991]{%
  \let\@oldtitle\@title%
  \gdef\@title{\@oldtitle\footnotetext{#1 \emph{Mathematics subject classification.} #2}}%
}
\newcommand{\keywords}[1]{%
  \let\@@oldtitle\@title%
  \gdef\@title{\@@oldtitle\footnotetext{\emph{Keywords and phrases.} #1.}}%
}
\makeatother

\newcommand{\RR}{\mathbb{R}}
\newcommand{\NN}{\mathbb{N}}
\newcommand{\TT}{\mathbb{T}}
\newcommand{\EE}{\mathbb{E}}
\newcommand{\PP}{\mathbb{P}}

\newcommand{\cP}{\mathcal{P}}

\newcommand{\cO}{\mathcal{O}}
\newcommand{\cL}{\mathcal{L}}

\newcommand{\norm}[2]{ \left \lVert {#1}  \right \rVert_{#2}}
\newcommand{\module}[1]{\left \lvert {#1} \right \rvert}

\DeclareMathOperator{\loc}{loc}

\DeclareMathOperator{\diver}{div}

\theoremstyle{plain}
\newtheorem{thm}{Theorem}[section]
\newtheorem{cor}[thm]{Corollary}
\newtheorem{lem}[thm]{Lemma}
\newtheorem{prop}[thm]{Proposition}

\theoremstyle{definition}
\newtheorem{defi}[thm]{Definition}

\theoremstyle{remark}
\newtheorem{rem}[thm]{Remark}
\newtheorem{ex}[thm]{Example}

\crefname{thm}{Theorem}{Theorems}
\crefname{cor}{Corollary}{Corollaries}
\crefname{lem}{Lemma}{Lemmata}
\crefname{prop}{Proposition}{Propositions}
\crefname{def}{Definition}{Definitions}
\crefname{rem}{Remark}{Remarks}
\crefname{ex}{Example}{Examples}

\usepackage[pagewise]{lineno}

\title{Approximation and perturbations of stable solutions to a stationary mean field game system}
\author{Jules Berry \\ {\small Univ Rennes, INSA, CNRS, IRMAR - UMR 6625, Rennes F-35000, France} \\ {\small jules.berry@insa-rennes.fr} \and Olivier Ley \\ {\small Univ Rennes, INSA, CNRS, IRMAR - UMR 6625, Rennes F-35000, France} \\ {\small olivier.ley@insa-rennes.fr} \and Francisco J. Silva \\ {\small Univ Limoges,  Faculté des Sciences et Techniques, XLIM-DMI, UMR-CNRS 7252, 87060 Limoges, France} \\ {\small francisco.silva@unilim.fr}}
\date{\today}

\begin{document}
\subjclass[2020]{35J47, 35Q89, 65N20}
\keywords{Mean field games, Newton's method, finite element method, numerical methods}

\maketitle

\justify

\begin{abstract}
   This work introduces a new general approach for the numerical analysis of stable equilibria to second order  mean field games systems in cases where the uniqueness of solutions may fail. We focus on a stationary case with a purely quadratic Hamiltonian. We propose an abstract framework to study these solutions by reformulating the mean field game system as an abstract equation in a Banach space. In this context, stable equilibria turn out to be regular solutions to this equation, meaning that the linearized system is well-posed. We provide three applications of this property: we study the sensitivity analysis of stable solutions, establish error estimates for their finite element approximations, and prove the local converge of Newton's method in infinite dimensions.
\end{abstract}

\section{Introduction}

Mean Field Games (MFG for short) were introduced independently by Lasry-Lions \cite{LL2007,LL2006,LL2006a} and Huang-Caines-Malhamé \cite{HMC2006,HCM2007}. The goal of this theory is to study (stochastic) differential games with a large number of interchangeable players. We refer the reader to \cite{ACDPS2020,CD2018,BFY2013,GS2014,L2007} for general references on this topic.

The numerical analysis of MFG systems introduced in \cite{LL2007,LL2006,LL2006a} has been extensively studied under a monotonicity assumption also introduced by Lasry and Lions,  see \cite[Chapter 4]{ACDPS2020}, \cite{A2013}, and the references therein. Indeed, the latter provides a sufficient condition for the uniqueness of solutions to MFG systems, which allows to show the convergence of numerical methods in \cite{ACC2013} and error estimates in \cite{BLP2023}. In the absence of this monotonicity assumption, uniqueness may fail (see \cite{BC2018,BF2019,CT2019,C2019}) and the study of MFG systems with several solutions is delicate both from the theoretical and numerical points of view. In \cite{BC2018}, Briani and Cardaliaguet have defined a particular notion of solution for second order potential MFG systems, the so-called {\it stable solutions} (see also \cite{BN2023} for a related notion in the context of first order mean field games). These solutions may not be unique but the authors show in \cite{BC2018} that they have some interesting properties motivating their name: stable solutions are isolated and the fictitious play algorithm, introduced in Cardaliaguet-Hadikhanloo \cite{CH2017}, converges locally to these solutions. We also mention the recent work by Tang-Song \cite{TS2024}, where the authors implement a smoothed policy iteration method to locally approximate stable solutions. In this paper, we provide new results in this direction, which reinforce the importance of the notion of stable solutions. We are going to prove that stable solutions are indeed stable under perturbations and that local convergence holds for their approximations by finite element methods and Newton iterations.\\

This paper is the first in a series of works dealing with the numerical analysis of stable equilibria to MFG models. Our goal is to introduce a general framework that covers different types of MFG systems under fairly general assumptions. In order to convey our main ideas, we focus in this paper on the following stationary MFG system
\begin{equation}
\label{eq:mfg}
 \begin{cases}
    - \Delta u + \frac{1}{2} \module{Du}^2 + \lambda u = f(m) \quad & \textnormal{in } \TT^d, \\
    - \Delta m - \diver \left ( m Du \right ) + \lambda m = \lambda m_0 \quad & \textnormal{in } \TT^d,
 \end{cases}
\end{equation}
where $\lambda > 0$ is a given constant and $m_0 \colon \TT^d \to \RR$ and $f \colon \RR \to \RR$ are given functions. This system has been introduced in the monograph by Bensoussan-Frehse-Yam \cite[Chapter 7]{BFY2013} and has been furtherly studied in \cite{GF2018,GMT2020}. We briefly recall its interpretation in \cref{section:MFG}. We have made the choice to consider this quite restrictive setting in order to simplify the notations and draw attention to the main ideas. However, we expect that our results can be extended to the case of Hamiltonians $H(x,p)$ that are twice continuously differentiable with respect to $p$, with bounded Hessian, and a coupling function $f(x,m)$.  When the Lasry-Lions monotonicity condition is used, we also require the (strict) convexity of the Hamiltonian.
\\

Let us now present the main contribution of this paper. We reformulate system \eqref{eq:mfg} in the form
\[
 F(u,m) = 0,
\]
where $F\colon X \to X$ is a nonlinear mapping, defined on a Banach space $X$, having the form
\[
 F= I + T \circ G.
\]
More precisely, given a suitable Banach space $Z$, we choose $T \colon Z \to X$ as the linear operator defined by $T(f,g) = (v,\rho)$, where $(v,\rho)$ solves
\[
 \begin{cases}
  - \Delta v + \lambda v = f & \quad \textnormal{in } \TT^d, \\
  - \Delta \rho + \lambda \rho = g  & \quad \textnormal{in } \TT^d,
 \end{cases}
\]
and $G \colon X \to Z$ the nonlinear mapping defined by
\[
 G(v,\rho) = \left ( \frac{1}{2} \module{Dv}^2 - f(\rho), - \diver \left (\rho Du \right ) - \lambda m_0 \right ).
\]
We refer the reader to \cref{section:Reformulation} for the details of this reformulation. In the case where the mapping $G$ is differentiable, the mapping $F$ is also differentiable  and the stability of a solution $(u,m)$ to \eqref{eq:mfg} is equivalent to the injectivity of the differential
\[
 dF[u,m] = I + T \circ dG[u,m].
\]
Thus, if $T$ is a compact operator and $(u,m)$ is a stable solution to \eqref{eq:mfg}, then $dF[u,m]$ is an injective perturbation of the identity by a compact linear operator. Henceforth, by the Fredholm alternative, we deduce that $dF[u,m]$ is an isomorphism on $X$. This isomorphism property will be rigorously established below for Banach spaces of the form
\[
 X = C^{2,\gamma}(\TT^d) \times C^{2,\beta}(\TT^d) \quad \textnormal{and} \quad X = W^{1,p}(\TT^d) \times L^q(\TT^d),
\]
but many other choices are possible depending on the application in mind.
\\

We now describe three applications of the above isomorphism property for stable solutions.

A first and straightforward application, which follows from the implicit function theorem, concerns the sensitivity analysis of stable solutions to \eqref{eq:mfg} under perturbations of the coupling function $f$ and the distribution $m_0$ (\cref{prop:perturbations} below).

In the second application, we make use of the Brezzi-Rappaz-Raviart theory on the approximation of nonlinear problems (see \cite{BRR1980,GR1986,CR1990,CR1997}) to obtain existence and error estimates for finite element approximations of stable solutions (\cref{thm:fem_approx} below). The finite element approximation of a MFG system similar to \eqref{eq:mfg} has been studied by Osborne and Smears in \cite{OS2024a} (see \cite{OS2023} for a parabolic MFG system), where the convergence of the approximations is established by using compactness arguments which do not provide error estimates.  While finishing this paper, we have learnt about the recent work \cite{OS2024b} by the same authors addressing this issue. Regarding the convergence analysis, the results in \cite{OS2024a,OS2024b} deal only with the case where the coupling term $f$ is (strongly) monotone.
In contrast, our results apply locally around any stable solution without requiring any monotonicity of the coupling and rely on a completely different approach.

In our last application, we provide convergence rates for the iterates of Newton's method in infinite dimension applied to \eqref{eq:mfg} in various functional spaces (\cref{thm:newton_1,thm:newton_2,thm:newton_3} below). Let us also mention that the analysis of Newton's method in infinite dimensions to approximate the solution to a time-dependent MFG system, with monotone couplings, has been  recently addressed by Camilli and Tang \cite{CT2024} using different techniques. Compared with their approach, our result follows directly from classical convergence results of Newton's iterates in function spaces (see~\cite{HPUU2009,Z1993,DR2014}) which allow us to deal with non-monotone couplings and to obtain convergence rates in stronger norms.
\\

The paper is structured as follows. In the preliminary \cref{section:MFG}, we study the well-posedness of \eqref{eq:mfg} and provide some useful estimates on the solutions. We then introduce the definition of stable solution to \eqref{eq:mfg}, which is similar to the one proposed in \cite{BC2018}, and give sufficient conditions for the existence of such solutions. In \cref{section:Reformulation} we reformulate solutions to \eqref{eq:mfg} as zeros of the nonlinear mapping $F$ and prove the isomorphism property of its differential under a continuous differentiability assumption. The latter is rigorously justified in \cref{section:differentiability} in the case of Hölder and Sobolev spaces. Finally, applications to the sensitivity analysis of \eqref{eq:mfg}, to its finite element approximation, and to the convergence of Newton's method, are studied in \cref{section:applications}.

\paragraph{Notations} For $k \in \NN$ and $\alpha \in (0,1]$, we write $C^{k,\alpha}(\TT^d)$ the usual Hölder space on $\TT^d$,  {\it i.e.}
\[
 C^{k,\alpha}(\TT^d) = \left \{ u \in C^k(\TT^d) : \norm{u}{C^{k,\alpha}} < +\infty \right \},
\]
where
\[
 \norm{u}{C^{k,\alpha}} = \sum_{\module{j} \leq k} \norm{\partial^j u}{L^\infty} + \sum_{\module{j} = k} \left [ \partial^j u \right]_{\alpha},
\]
with
\[
\left [ u \right]_{\alpha} = \sup_{\substack{y,x \in \TT^d \\ x \neq y}} \frac{\module{u(x) - u(y)}}{\module{x -y}^\alpha}.
\]

Similarly, $C^{k,\alpha}_{\loc}(\RR)$ is the space of locally Hölder continuous functions on $\RR$, \textit{i.e.} $f \in C^{k,\alpha}_{\loc}(\RR)$ if $f \in C^{k,\alpha}(\Omega)$ for every bounded open subset $\Omega \subset \RR$. We also write $C_b^{k}(\RR)$ the set of $k$ times continuously differentiable functions $f$ on $\RR$ such that the derivatives $f^{(j)}$, for $0 \leq j \leq k$, are bounded. We denote by $\cP(\TT^d)$ the space on probability measures over $\TT^d$ and we always identify a measure $m \in \cP(\TT^d)$ with its density, which we also denote $m$, provided that the latter exists. For $1 < p \leq \infty$ the dual of the Sobolev space $W^{1,p}(\TT^d)$ is denoted by $W^{-1,p'}(\TT^d)$, where $1/p + 1/{p'} = 1$, and we reserve the notation $H^{-1}(\TT^d)$ for the dual of $H^1(\TT^d)$.

For Banach spaces $X$ and $Y$ and a mapping $\Phi \colon X \to Y$, we write $d\Phi[x]$ the Fréchet differential of $\Phi$ at $x \in X$, when it exists. We also use the notation  $Y \hookrightarrow X$ when $Y$ is continuously embedded in $X$.

By convention, we do not specify integration domains when integrals are considered on $\TT^d$, \textit{i.e.},
\[
 \int f \ d x := \int_{\TT^d} f \ dx.
\]

\section{The mean field game system}
\label{section:MFG}

In this section we establish some properties of the MFG system \eqref{eq:mfg}. We first state existence of solutions to \eqref{eq:mfg} as well as a uniqueness result under the Lasry-Lions monotonicity condition on the coupling $f$. We then define stable solutions to \eqref{eq:mfg} following \cite{BC2018} and prove that they are isolated. Finally, we provide sufficient conditions to ensure that any classical solution to \eqref{eq:mfg} is stable.\\

Let us begin by describing the mean field game interpretation of system \eqref{eq:mfg}. We consider a typical player whose dynamics is governed by the following controlled stochastic differential equation
\[
 \begin{cases}
   dX_t^{\alpha} = \alpha(X_t^{\alpha}) dt + \sqrt 2 dB_t \quad \textnormal{for all } t >0, \\
   X_0^{\alpha} = x,
 \end{cases}
\]
where $B_t$ is a $d$-dimensional Brownian motion and $\alpha$ is a feedback control. Assume that the player forecasts $\hat \rho$ as being the evolution of the distribution of players, from which it is possible to compute the weighted averaged density $\hat m$ defined by
\begin{equation}\label{eq:averaged_density}
 \hat m(x) = \lambda \int_0^{+\infty} \hat \rho(t,x) e^{-\lambda t}\ dt
\end{equation}
for some given $\lambda > 0$. Then this player aims to minimize the following cost
\[
 J(x,\alpha) = \EE \left [ \int_0^{+\infty} \left (\frac{\module{\alpha(X_t^{\alpha})}^2}{2} + f(\hat m(X_t^\alpha)) \right) e^{- \lambda t}\ dt \right ].
\]
This yields the Hamilton-Jacobi equation for the value function $u(x) := \inf_{\alpha} J(x,\alpha)$ :
\[
 -\Delta u + \frac{\module{Du}^2}{2} + \lambda u = f(\hat m).
\]
Since all the players are assumed to be interchangeable, they should all play according to the optimal strategy provided by the Hamilton-Jacobi equation, \textit{i.e.} $\alpha^\star(x) = - Du(x)$ (see \cite{FR2012,FS2006,YZ1999}). The player must then update the forecasted density by solving the Fokker-Planck equation
\begin{equation}\label{eq:fokker-planck}
 \begin{cases}
 \partial_t \rho - \Delta \rho - \diver(\rho Du) = 0,\\
 \rho(0) = m_0,
 \end{cases}
\end{equation}
where $m_0$ is the initial distribution of agents. This yields an updated averaged density $m$ through \eqref{eq:averaged_density}. Using integration by parts in \eqref{eq:averaged_density} one can easily derive the following equation on $m$
\begin{equation}\label{eq:model_fokker_plack}
 - \Delta m - \diver(m Du) + \lambda m = \lambda m_0.
\end{equation}

As usual we define an MFG equilibrium as being a fix point of this procedure which corresponds to a solution to \eqref{eq:mfg}. Notice that once we have a solutions to \eqref{eq:mfg} it is possible to recover the probability density function $\rho$ by solving the Fokker-Planck equation \eqref{eq:fokker-planck}.\\

In all of this paper we make the assumption that
\begin{equation}\label{h:m0}
 m_0 \in \cP(\TT^d) \cap C^{0,\alpha}(\TT^d) \quad  \textnormal{for some } \alpha \in (0,1).
\end{equation}

\subsection{Well-posedness}

The following result was established in \cite[Chapter 7]{BFY2013} by approximation. For the sake of self-containedness we provide a proof in Appendix \ref{section:well_posed} based on the classical method introduced by Lasry-Lions.
\begin{thm}\label{thm:well_posed}
Assume that $f \in W^{1,\infty}(\RR)$. Then there exists a classical solution $(u,m) \in C^{2,\alpha}(\TT^d) \times C^{2,\alpha}(\TT^d)$ to \eqref{eq:mfg}, where the constant $\alpha$ is fixed in \eqref{h:m0}. Furthermore, if $f' \geq 0$ or if $\lambda$ is large enough, then this solution is unique.
\end{thm}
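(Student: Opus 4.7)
The plan is to follow the classical Lasry--Lions scheme: set up a Schauder fixed-point map on the space of probability densities, and then use the quadratic structure of the Hamiltonian together with the Lasry--Lions monotonicity trick for uniqueness.

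For existence, I would introduce the mapping $\Phi \colon K \to K$, where $K \subset \mathcal{P}(\TT^d) \cap C^{0,\alpha}(\TT^d)$ is a closed convex bounded set (in the $C^{0,\alpha}$ norm), of radius to be fixed later. Given $\mu \in K$, one first solves the HJB equation
\[
 -\Delta u + \tfrac{1}{2}\module{Du}^2 + \lambda u = f(\mu) \quad \textnormal{in } \TT^d,
\]
and then the linear Fokker--Planck equation
\[
 -\Delta m - \diver(m Du) + \lambda m = \lambda m_0 \quad \textnormal{in } \TT^d,
\]
and sets $\Phi(\mu) \eqdef m$. A fixed point of $\Phi$ is a solution of \eqref{eq:mfg}. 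The a priori estimates needed are: (i) since $f \in W^{1,\infty}(\RR)$ the right-hand side of the HJB is uniformly bounded, so the maximum principle gives $\norm{u}{L^\infty} \leq \norm{f}{L^\infty}/\lambda$; (ii) a Bernstein-type argument exploiting the quadratic Hamiltonian yields a bound on $\norm{Du}{L^\infty}$ depending only on $\norm{f}{L^\infty}$, $\lambda$ and the Lipschitz constant of $f$; (iii) Schauder estimates applied to the HJB then give $u \in C^{2,\alpha}(\TT^d)$ with a bound depending on $\norm{\mu}{C^{0,\alpha}}$; (iv) the FP equation, rewritten as $-\Delta m - Du \cdot Dm + (\lambda - \Delta u)m = \lambda m_0$, has Hölder coefficients, so a further application of Schauder gives $m \in C^{2,\alpha}(\TT^d)$ with an \emph{a priori} bound independent of $\mu$ for $R$ large enough. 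This makes $\Phi$ map $K$ into itself, and the embedding $C^{2,\alpha} \hookrightarrow\hookrightarrow C^{0,\alpha}$ makes it compact. Continuity in $\mu$ follows from uniqueness of the solution to each of the two PDEs, so Schauder's fixed-point theorem applies.

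For uniqueness under $f' \geq 0$, let $(u_1,m_1)$ and $(u_2,m_2)$ be two classical solutions. Subtracting the two HJB equations and testing against $m_1 - m_2$, subtracting the two FP equations and testing against $u_1 - u_2$, integrating by parts and using the exact identity
\[
 H(p) - H(q) = Dq \cdot (p-q) + \tfrac{1}{2}\module{p-q}^2
\]
valid for $H(p) = \tfrac{1}{2}\module{p}^2$, all the mixed terms cancel and one arrives at the Lasry--Lions identity
\[
 \tfrac{1}{2}\int (m_1 + m_2)\,\module{D(u_1 - u_2)}^2\,dx + \int \bigl(f(m_1) - f(m_2)\bigr)(m_1 - m_2)\,dx = 0.
\]
Monotonicity of $f$ and nonnegativity of $m_1, m_2$ force both integrands to vanish. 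The strong maximum principle applied to the FP equation ensures $m_i > 0$ on $\TT^d$, hence $D(u_1 - u_2) \equiv 0$ and $u_1 - u_2 \equiv c$. Plugging this into the FP equation shows that $m_1$ and $m_2$ both solve the same linear Fokker--Planck problem, whose solution is unique, so $m_1 = m_2$; the HJB then forces $c = 0$.

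The main technical obstacle is step (ii), the uniform Lipschitz bound on $u$ for the quadratic HJB as $\mu$ ranges over $K$. The Hopf--Cole transform would linearize the HJB and make this bound immediate, but the authors explicitly wish to avoid it for generalizability; I would therefore rely on a Bernstein argument applied to $w = \module{Du}^2$, leveraging that the bad gradient term in the equation for $w$ has a favorable sign because of the quadratic Hamiltonian. Once this estimate is in hand the rest of the proof is a bookkeeping of Schauder estimates and standard fixed-point and monotonicity arguments.
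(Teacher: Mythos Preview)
Your overall strategy is the same as the paper's: a Schauder fixed point on the density, with the Lasry--Lions duality argument for uniqueness. The uniqueness argument is essentially identical.

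There is one genuine soft spot in your step (iv). You write that Schauder estimates on the Fokker--Planck equation give $m\in C^{2,\alpha}$ with a bound \emph{independent of $\mu$}. But the non-divergence form $-\Delta m - Du\cdot Dm + (\lambda-\Delta u)m = \lambda m_0$ has coefficients $Du$ and $\Delta u$ whose $C^{0,\alpha}$ norms, via your step (iii), depend on $\norm{\mu}{C^{0,\alpha}}$. The Schauder constant therefore depends on $R$, and you have not shown that the resulting bound on $\Phi(\mu)$ is $\leq R$; the map need not send the ball to itself. The paper closes this loop differently: it applies a $W^{1,p}$ estimate to the Fokker--Planck equation in \emph{divergence} form (their \cref{lem:fokker_planck}\ref{item:fp_W1p}), whose constant depends only on $\norm{Du}{L^\infty}$. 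Since your step (ii) bounds $\norm{Du}{L^\infty}$ by a constant depending only on $\norm{f}{L^\infty}$ (and not on $\mu$), this yields a bound on $\norm{\Phi(\mu)}{W^{1,p}}$, hence on $\norm{\Phi(\mu)}{C^{0,\beta}}$ via Morrey, that is uniform over all $\mu$. Compactness then follows. A related technicality is that the gradient H\"older exponent coming from the Bernstein/De Giorgi step may be some $\gamma<\alpha$, so the paper carries out the fixed-point argument in $C^{0,\beta}$ with $\beta=\min\{\alpha,\gamma\}$ and recovers the full $C^{2,\alpha}\times C^{2,\alpha}$ regularity a posteriori by bootstrapping.
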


We now turn to technical results, which will be used throughout the paper. The first one recalls some properties of Fokker-Planck type equations.
\begin{lem}\label{lem:fokker_planck}
 Let $b \in L^\infty(\TT^d; \RR^d)$, $f\in L^2(\TT^d)$, and $g \in L^2(\TT^d;\RR^d)$. There exists a weak solution $m \in H^1(\TT^d)$ to
 \begin{equation}\label{eq:fokker_planck_general}
  - \Delta m - \diver(mb) + \lambda m = f + \diver(g) \quad \textnormal{in } \TT^d.
 \end{equation}
 Moreover,
 \begin{enumerate}[label={\rm(\roman*)}]
  \item  \label{item:fp_uniqueness} $m$ is the only element in $L^2(\TT^d)$ such that
  \[
  \int (-\Delta \varphi + b \cdot D\varphi + \lambda \varphi)m \ dx = \int f \varphi - g \cdot D\varphi\ dx \quad \textnormal{for every } \varphi \in H^2(\TT^d).
 \]

 \item \label{item:fp_positive} If $f \geq 0$ and $\diver(g) = 0$, then, either $f = 0$ and $m =0$, or $m > 0$.

 \item \label{item:fp_W1p}  If $p \geq 2$, $g \in L^p(\TT^d; \RR^d)$, and $f \in L^{q}(\TT^d)$, with
 \[
  \begin{cases}
   q = dp/(d+p) \quad & \textnormal{if } p \neq d \textnormal{ and } d \geq 2, \\
   q > d/2 \quad & \textnormal{if } p = d \textnormal{ and } d \geq 2, \\
   q = 1 \quad & \textnormal{if } d = 1,
  \end{cases}
 \]
 then $m \in W^{1,p}(\TT^d)$ and there exists a positive constant $C=C(\norm{b}{L^\infty},\lambda,d,p,q)$ such that  
  \begin{equation}\label{eq:W1p_estimate}
  \norm{m}{W^{1,p}} \leq C \left (\norm{m}{L^1} + \norm{g}{L^p} +\norm{f}{L^q} \right).
 \end{equation}

 \item \label{item:fp_DGNM} If $g \in L^p(\TT^d; \RR^d)$ and $f \in L^{q}(\TT^d)$ for $p>d$ and $q > d/2$, then there exists a positive constant $C=C(\norm{b}{L^\infty},\norm{f}{L^{p/2}},\norm{g}{L^p},\lambda,d,p,q)$ such that
 \begin{equation}\label{eq:m_bounded}
  \norm{m}{L^\infty} \leq C.
 \end{equation}
 \end{enumerate}
\end{lem}
\begin{proof}
Let us start by proving that
\begin{equation}
\label{eq:injectivite_op_div}
\left [ m\in  L^2(\TT^d) \textnormal{ and }
\int (-\Delta \varphi + b \cdot D\varphi + \lambda \varphi)m \ dx = 0 \quad \textnormal{for every } \varphi \in H^2(\TT^d) \right ] \;\Longrightarrow\;\; m=0.
\end{equation}
Let $\xi \in L^2(\TT^d)$ and $v \in H^2(\TT^d)$, be the unique solution to
 \begin{equation}\label{eq:fp_unique_dual}
  - \Delta v + b \cdot Dv + \lambda v = \xi \quad \textnormal{in } \TT^d.
 \end{equation}
Existence and uniqueness of such a solution is proved in the case of Dirichlet boundary conditions in \cite[Theorem 8.9]{GT2001}.
Using $m$ as a test function for \eqref{eq:fp_unique_dual}, we have
\[
 \int \xi m\ dx = \int( - \Delta v + b \cdot Dv + \lambda v)m \ dx= 0 .
\]
Since $\xi$ is arbitrary, we obtain $m=0$, which shows \eqref{eq:injectivite_op_div}. Existence of a weak solution $m \in H^1(\TT^d)$ to \eqref{eq:fokker_planck_general} for $\lambda > 0$ large enough is a standard consequence of the Lax-Milgram theorem. One can then extend this result to every $\lambda > 0$ by using Fredholm's alternative in conjunction with \eqref{eq:injectivite_op_div}, as in the proof of \cite[Theorem 8.3]{GT2001}. In particular, we have established \ref{item:fp_uniqueness}.

In addition, when $\xi \geq 0$, $\diver(g) = 0$ and $f \geq 0$, we obtain similarly that
\[
 \int \xi m\ dx = \int( - \Delta v + b \cdot Dv + \lambda v)m \ dx= \int f v \, dx \geq 0
\]
since, in this case, $v \geq 0$ as a consequence of the strong maximum principle \cite[Theorem 8.19]{GT2001}. It follows that $m \geq 0$. The strict positivity when $m \neq 0$ is then a consequence of Harnack's inequality \cite[Theorem 8.20]{GT2001}. This proves \ref{item:fp_positive}.

Finally, the $W^{1,p}$ estimate \ref{item:fp_W1p} is a direct consequence of \cite[Theorem 1.7.4]{BKRS2015} while \ref{item:fp_DGNM} follows from De Giorgi-Nash-Moser estimates \cite[Theorem 8.17]{GT2001}.
\end{proof}

\begin{rem}\label{rem:dual_sobolev}
 We recall from \cite[Theorem 10.41]{L2009} that, for $1 < r \leq \infty$, any element $h \in W^{-1, r'}(\TT^d)$, where $1/r+1/r' = 1$, can be identified with $g_1 + \diver (g_2)$ for $g_1 \in L^r(\TT^d)$, $g_2 \in L^r(\TT^d;\RR^d)$ and $\norm{h}{W^{-1,r'}} = \left ( \norm{g_1}{L^r}^r +  \norm{g_2}{L^r}^r \right )^{1/r}$. In particular the conclusions of \cref{lem:fokker_planck} can be extended to equations with right-hand side in $W^{-1, r'}(\TT^d)$ for appropriate values of $r$.
\end{rem}

The following proposition contains a priori estimates on classical solutions to \eqref{eq:mfg}.
\begin{prop}\label{prop:estimates_mfg}
 Assume $f \in C_b^0(\RR)$. Then there exists a positive constant $K = K(\norm{f}{L^\infty},d)$ such that for every classical solution $(u,m)$ to \eqref{eq:mfg} it holds that
 \begin{equation}\label{eq:u_bounded}
  \norm{u}{L^\infty} \leq \frac{\norm{f}{L^\infty}}{\lambda},
 \end{equation}
 \begin{equation}\label{eq:u_lipschitz}
  \norm{Du}{L^\infty} \leq K,
 \end{equation}
 and
 \begin{equation}\label{eq:laplacian_estimate}
  \norm{\Delta u}{L^\infty} \leq 2 \norm{f}{L^\infty} + \frac{K^2}{2} =: M.
 \end{equation}
 Furthermore, if $\lambda > M$, then
 \begin{equation}\label{eq:m_bounded_lamba}
  \norm{m}{L^\infty} \leq \frac{\lambda}{\lambda - M} \norm{m_0}{L^\infty}.
 \end{equation}
\end{prop}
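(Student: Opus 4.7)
The plan is to establish the four inequalities sequentially, each relying on the previous ones. The $L^\infty$ bound \eqref{eq:u_bounded} follows from the scalar maximum principle applied to the Hamilton-Jacobi equation: at a maximum point $x_0$ of $u$, the terms $-\Delta u(x_0) \geq 0$ and $\frac{1}{2}|Du(x_0)|^2 \geq 0$ drop, leaving $\lambda u(x_0) \leq f(m(x_0)) \leq \norm{f}{L^\infty}$; the lower bound is symmetric.

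For the Lipschitz bound \eqref{eq:u_lipschitz}, I rewrite the HJ equation as $-\Delta u + \frac{1}{2}|Du|^2 = g$ with $g := f(m) - \lambda u$, so that \eqref{eq:u_bounded} immediately gives $\norm{g}{L^\infty} \leq 2\norm{f}{L^\infty}$. I then appeal to the classical Bernstein--Lions type $L^\infty$ gradient estimate for viscous Hamilton-Jacobi equations with bounded right-hand side on the torus, which yields $\norm{Du}{L^\infty} \leq K$ with $K = K(\norm{f}{L^\infty}, d)$. Estimate \eqref{eq:laplacian_estimate} is then immediate from the HJ equation read as $\Delta u = \frac{1}{2}|Du|^2 + \lambda u - f(m)$, upon taking $L^\infty$ norms and combining with \eqref{eq:u_bounded} and \eqref{eq:u_lipschitz}.

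For \eqref{eq:m_bounded_lamba}, I expand the divergence in the Fokker-Planck equation to obtain $-\Delta m - Du \cdot Dm + (\lambda - \Delta u) m = \lambda m_0$. \cref{lem:fokker_planck}\ref{item:fp_positive} applied to this equation with nonnegative source $\lambda m_0$ ensures $m \geq 0$. At a point $x_0$ where $m$ attains its maximum, $-\Delta m(x_0) \geq 0$ and $Dm(x_0) = 0$, so substituting into the equation gives $(\lambda - \Delta u(x_0)) m(x_0) \leq \lambda m_0(x_0) \leq \lambda \norm{m_0}{L^\infty}$. The hypothesis $\lambda > M$ combined with \eqref{eq:laplacian_estimate} ensures $\lambda - \Delta u(x_0) \geq \lambda - M > 0$, and dividing produces \eqref{eq:m_bounded_lamba}.

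The delicate step is the Lipschitz bound. A direct Bernstein computation on $w = |Du|^2$ produces a term $Du \cdot Dg$ which cannot be controlled from the sole assumption $f \in C_b^0$. The standard remedy is an approximation argument: mollify $f$ into smooth data $f_n$ with $\norm{f_n}{L^\infty} \leq \norm{f}{L^\infty}$, establish the Bernstein gradient bound for the corresponding classical solutions $u_n$ with a constant depending only on $\norm{f}{L^\infty}$ and $d$, and pass to the limit by stability of the MFG system.
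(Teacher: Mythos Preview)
Your argument for \eqref{eq:u_bounded}, \eqref{eq:laplacian_estimate}, and \eqref{eq:m_bounded_lamba} coincides with the paper's: comparison/maximum principle for the first, direct substitution for the third, and rewriting the Fokker--Planck equation in non-divergence form followed by the comparison principle (equivalently, your pointwise maximum argument) for the last. For \eqref{eq:u_lipschitz} the paper simply cites \cite[Theorem 1.1]{LN2017}, which applies to continuous viscosity solutions with a merely bounded source---this matches your appeal in the second paragraph.

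The gap is in your final paragraph. The approximation scheme you propose---mollify $f$ into $f_n$, solve the \emph{perturbed MFG system}, and pass to the limit ``by stability of the MFG system''---is circular at this point in the paper: no well-posedness, uniqueness, or continuous-dependence result for the full system is available before \cref{prop:estimates_mfg} (indeed, such stability is the subject of the later sections, and uniqueness may fail altogether). There is no guarantee that solutions of the perturbed MFG system converge to the given classical solution $(u,m)$.

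If you want a self-contained argument rather than a citation, run the approximation at the level of the single Hamilton--Jacobi equation, not the coupled system: freeze the given $(u,m)$, note that $u$ solves $-\Delta u+\tfrac12|Du|^2+\lambda u=f(m)$ with continuous right-hand side $f(m)$, approximate $f(m)$ uniformly by smooth $\varphi_n$, solve $-\Delta u_n+\tfrac12|Du_n|^2+\lambda u_n=\varphi_n$, obtain the uniform Bernstein bound $\|Du_n\|_{L^\infty}\leq K(\|f\|_{L^\infty},d)$, and pass to the limit using the comparison principle for this scalar equation (which is available and gives $u_n\to u$). Alternatively, the weak Bernstein method of Barles avoids approximation entirely.
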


\begin{proof}
Inequality \eqref{eq:u_bounded} is a direct consequence of the comparison principle for $u$ and \eqref{eq:u_lipschitz} is given by \cite[Theorem 1.1]{LN2017}, since any classical solution is also a continuous viscosity solution. The estimate \eqref{eq:laplacian_estimate} directly follows from the equation satisfied by $u$. For the last inequality \eqref{eq:m_bounded_lamba}, notice that we may rewrite the second equation in \eqref{eq:mfg} as
 \[
  -\Delta m - Du \cdot Dm + (\lambda - \Delta u)m = \lambda m_0 \quad \textnormal{in } \TT^d.
 \]
 Therefore when $\lambda > M$ we deduce from \eqref{eq:laplacian_estimate} that $\lambda - \Delta u > 0$ and therefore the Fokker-Planck equation satisfies a comparison principle from which the inequality follows.
\end{proof}

\subsection{Stable solutions}
\label{section:stable}
The following definition is taken from \cite{BC2018}.
\begin{defi}[Stable solutions]\label{def:stable}
 Let $f \in C^1_b(\RR)$ and let $(u,m)$ be a classical solution to \eqref{eq:mfg}. We say that $(u,m)$ is stable if $(v,\rho) = (0,0)$ is the unique classical solution to
 \begin{equation}\label{eq:mfg_lin}
 \begin{cases}
  - \Delta v + Du \cdot Dv + \lambda v = f'(m) \rho &  \quad \textnormal{in } \TT^d, \\
  - \Delta \rho - \diver \left (\rho Du \right) + \lambda \rho = \diver \left (m Dv \right )  &  \quad \textnormal{in } \TT^d.
 \end{cases}
\end{equation}
\end{defi}

\begin{defi}
   Let $f \in C^1_b(\RR)$ and let $(u,m)$ be a classical solution to \eqref{eq:mfg}. A pair $(v,\rho) \in H^1(\TT^d) \times L^2(\TT^d)$ is a weak solution to \eqref{eq:mfg_lin} if it satisfies
   \begin{equation}\label{eq:mfg_lin_weak_1}
    \int Dv \cdot D\varphi + Du \cdot Dv \varphi + \lambda v \varphi\ dx =  \int f'(m)\rho \varphi \ dx \quad \textnormal{for every } \varphi \in H^1(\TT^d)
   \end{equation}
   and
   \begin{equation}\label{eq:mfg_lin_weak_2}
    \int (-\Delta \psi + Du \cdot D \psi  + \lambda \psi ) \rho\ dx = - \int  m Dv \cdot D\psi \ dx \quad \textnormal{for every } \psi \in H^2(\TT^d).
   \end{equation}
\end{defi}

\begin{lem}\label{lem:mfg_lin_reg}
 Let $f \in C_b^{1}(\RR) \cap C^{1,1}_{\loc}(\RR)$ and $(u,m) \in C^{2,\alpha}(\TT^d) \times C^{2,\alpha}(\TT^d)$ be a classical solution to \eqref{eq:mfg} and $(v,\rho) \in H^1(\TT^d) \times L^2(\TT^d)$ be a weak solution to \eqref{eq:mfg_lin}. Then $(v,\rho) \in C^{2,\alpha}(\TT^d) \times C^{2,\alpha}(\TT^d)$ and is a classical solution to \eqref{eq:mfg_lin}.
\end{lem}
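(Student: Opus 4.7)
The plan is to perform a classical elliptic bootstrap, improving the integrability of $(v,\rho)$ via Sobolev embeddings together with $W^{2,p}$ (Calderón--Zygmund) estimates for the $v$-equation and the $W^{1,p}$ estimate of \cref{lem:fokker_planck}\ref{item:fp_W1p} for the $\rho$-equation, until Hölder regularity is reached. Schauder estimates will then upgrade the pair to $C^{2,\alpha}$.

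First, I would note that since $(u,m) \in C^{2,\alpha}(\TT^d)\times C^{2,\alpha}(\TT^d)$ and $f \in C_b^1 \cap C^{1,1}_{\loc}$, the coefficients $Du$, $\Delta u$, and $f'(m)$ appearing in \eqref{eq:mfg_lin} all belong to $C^{0,\alpha}(\TT^d)$, and in particular are in $L^\infty$. The iteration starts from $v \in H^1$, $\rho \in L^2$: the right-hand side $f'(m)\rho$ of the $v$-equation lies in $L^2$, so global $W^{2,2}$ estimates give $v \in W^{2,2}$ and hence $Dv \in L^{2^*}$ with $2^* = 2d/(d-2)$ if $d\geq 3$ (or any $L^p$ if $d \leq 2$). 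Applying \cref{lem:fokker_planck}\ref{item:fp_W1p} (combined with \cref{rem:dual_sobolev}) to the $\rho$-equation with $g = mDv$ and $f = 0$ then upgrades $\rho$ to $W^{1,2^*}$, hence $\rho \in L^{(2^*)^*}$. Iterating this Calderón--Zygmund/Fokker--Planck loop, and using Sobolev embedding at each step, one gains a fixed amount of integrability in every round, so after a finite number of steps (depending only on $d$) one obtains $v \in W^{2,p}$ and $\rho \in W^{1,p}$ for every $p < \infty$.

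Next, by Morrey embedding, choosing $p$ large enough that $1 - d/p \geq \alpha$, I would conclude $v \in C^{1,\alpha}(\TT^d)$ and $\rho \in C^{0,\alpha}(\TT^d)$. Since $f' \in C^{0,1}_{\loc}$ and $m \in C^{2,\alpha}$ is bounded, we have $f'(m) \in C^{0,\alpha}$, hence $f'(m)\rho \in C^{0,\alpha}(\TT^d)$. Schauder's estimate applied to the $v$-equation, whose drift $Du$ lies in $C^{1,\alpha} \subset C^{0,\alpha}$ and whose zero-order coefficient $\lambda$ is constant, then yields $v \in C^{2,\alpha}(\TT^d)$.

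Finally, I would rewrite the $\rho$-equation in non-divergence form by expanding the divergences:
\[
-\Delta \rho - Du\cdot D\rho + (\lambda - \Delta u)\rho = Dm\cdot Dv + m \Delta v.
\]
With $v \in C^{2,\alpha}$ and $m \in C^{2,\alpha}$, the right-hand side belongs to $C^{0,\alpha}$, and the coefficients $-Du$ and $\lambda - \Delta u$ are in $C^{0,\alpha}$ as well. Schauder's estimate gives $\rho \in C^{2,\alpha}(\TT^d)$, and $(v,\rho)$ is then a classical solution to \eqref{eq:mfg_lin}. I do not anticipate any serious obstacle: the only care needed is in tracking the Sobolev exponents through the bootstrap (which terminates in a finite number of steps depending only on $d$), after which everything is a direct application of classical $L^p$ and Schauder elliptic theory, e.g.\ from \cite{GT2001}.
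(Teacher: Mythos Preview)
Your proposal is correct and follows essentially the same elliptic bootstrap as the paper: alternate Calder\'on--Zygmund estimates on the $v$-equation with Fokker--Planck estimates on the $\rho$-equation to reach arbitrary $L^p$ integrability, then apply Morrey and Schauder. The only cosmetic difference is that the paper expands $\operatorname{div}(mDv)$ and obtains $W^{2,q}$ regularity for $\rho$ (invoking the uniqueness statement \cref{lem:fokker_planck}\ref{item:fp_uniqueness} to identify the improved solution with $\rho$), whereas you apply \cref{lem:fokker_planck}\ref{item:fp_W1p} directly in divergence form; both routes produce the same integrability gain $q\mapsto dq/(d-2q)$ per step.
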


\begin{proof}
 Notice first that if $\rho \in C^{0,\alpha}(\TT^d)$, then by Schauder estimates \cite[Corollary 6.3]{GT2001} we have $v \in C^{2,\alpha}(\TT^d)$ and then also $\rho \in C^{2,\alpha}(\TT^d)$. It is therefore enough to prove that if $(v,\rho) \in H^1(\TT^d) \times L^2(\TT^d)$ is a weak solution to \eqref{eq:mfg_lin} then $\rho \in C^{0,\alpha}(\TT^d)$. For this, let $1 < p < \infty$, depending on $d$ and $\alpha$, be such that Morrey's inequality yields $W^{1,p}(\TT^d) \hookrightarrow C^{0,\alpha}(\TT^d)$. It is now enough to prove that $v \in W^{1,p}(\TT^d)$ which implies $\rho \in W^{1,p}(\TT^d)$ by \cref{lem:fokker_planck} \ref{item:fp_W1p}.

 Since $\rho \in L^2(\TT^d)$ and $f'$ is bounded we deduce from standard elliptic regularity that $v \in H^2(\TT^d)$. Therefore in the case $d = 1,2$ the fact that $v \in W^{1,p}(\TT^d)$ directly follows from Sobolev's inequality. In the rest of the proof we assume $d \geq 3$.

 Since $v \in H^2(\TT^d)$ we have that $\diver(mDv) = Dm \cdot Dv + m \Delta v$ is an element of $L^2(\TT^d)$. It is then well known that the second equation in \eqref{eq:mfg_lin} has a weak solution $\tilde \rho \in H^2(\TT^d)$. Using the property \ref{item:fp_uniqueness} in \cref{lem:fokker_planck}, we deduce that in fact $\tilde \rho = \rho$ and therefore $\rho \in H^2(\TT^d)$. In the case where $d = 3,4$ we have $\rho \in L^{r}(\TT^d)$ for every $2 \leq r < \infty$ and in particular $\rho \in L^{p}(\TT^d)$. Injecting this information in the equation satisfied by $v$ we conclude that $v \in W^{2,p}(\TT^d)$ (see \cite[Theorem 9.11]{GT2001}) and the conclusion follows. We may therefore assume that $d \geq 5$ and in this case Sobolev's inequality yields $\rho \in L^{q_1}(\TT^d)$, where $q_1 = \frac{2d}{d-4}$ and, arguing as above, that $v \in W^{2,q_1}(\TT^d)$.

 We now know that $\diver(mDv)$ belongs to $L^{q_1}(\TT^d)$. One may keep this bootstrap argument going and conclude that either we can obtain that $\rho \in W^{2,d/2}(\TT^d)$ after a finite number of steps, or there exists a sequence of real numbers $2 \leq q_n < d/2$, defined by
 \[
  \begin{cases}
   q_{n+1} = \frac{d q_n}{d - 2q_n}, \\
   q_0 = 2,
  \end{cases}
 \]
 and such that $\rho \in W^{2,q_n}(\TT^d)$ for every $n \geq 1$.

 We claim that the latter case cannot happen. Indeed, if it were the case, we notice that $\frac{q_{n+1}}{q_n} = \frac{d}{d - 2q_n} > 1$ so that the sequence is increasing. In particular $q_n \geq 2$ for every $n$ and therefore $\frac{q_{n+1}}{q_n} > \frac{d}{d-4}$. It follows that $\frac{d}{2} > q_n \geq 2 \left(\frac{d}{d-4} \right)^n$ which yields a contradiction.

 We must therefore have $\rho \in W^{2,d/2}(\TT^d)$ after a finite number of steps and hence, also, $\rho \in L^s(\TT^d)$ for every $s \in [1, \infty)$. Using elliptic regularity one more time we have $v \in W^{1,p}(\TT^d)$. This concludes the proof according to the discussion at the beginning of the argument.
\end{proof}

The following proposition states that stable solutions to \eqref{eq:mfg}, although not unique in general, are isolated. The result is a straightforward adaptation of \cite[Proposition 4.2]{BC2018} with weaker norms. We provide the proof in Appendix \ref{section:isolated}.

\begin{prop}[Stable solutions are isolated]\label{prop:isolated}
 Let $f \in C_b^{1}(\RR)$, let $(u,m)$ be a stable solution to \eqref{eq:mfg}. Then there exists $R > 0$ such that, if $(\tilde u, \tilde m) \neq (u,m)$ is another classical solution to \eqref{eq:mfg}, then
 \[
  \norm{u - \tilde u}{H^1} + \norm{m - \tilde m}{L^2} > R.
 \]
\end{prop}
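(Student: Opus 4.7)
The plan is to argue by contradiction. Suppose the statement fails, so there exists a sequence of classical solutions $(u_n, m_n) \neq (u, m)$ to \eqref{eq:mfg} with $\epsilon_n := \norm{u_n - u}{W^{1,p}} + \norm{m_n - m}{L^2} \to 0$. Introduce the normalized differences $v_n := (u_n - u)/\epsilon_n$ and $\rho_n := (m_n - m)/\epsilon_n$, so that $\norm{v_n}{W^{1,p}} + \norm{\rho_n}{L^2} = 1$. Subtracting the MFG equations for $(u_n, m_n)$ and $(u, m)$ and using the elementary factorizations
\begin{align*}
 \module{Du_n}^2 - \module{Du}^2 &= (Du_n + Du) \cdot (Du_n - Du), \\
 m_n Du_n - m Du &= m_n (Du_n - Du) + (m_n - m) Du,
\end{align*}
together with the Taylor formula $f(m_n) - f(m) = g_n (m_n - m)$ where $g_n := \int_0^1 f'(m + t(m_n - m))\, dt$, division by $\epsilon_n$ yields that $(v_n, \rho_n)$ solves the perturbed linear system
\[
 \begin{cases}
  - \Delta v_n + \tfrac{1}{2}(Du_n + Du) \cdot Dv_n + \lambda v_n = g_n \rho_n, \\
  - \Delta \rho_n - \diver(\rho_n Du + m_n Dv_n) + \lambda \rho_n = 0,
 \end{cases}
\]
whose coefficients formally converge to those of the linearized system \eqref{eq:mfg_lin}.

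The heart of the argument is compactness. By \cref{prop:estimates_mfg} and standard Schauder bootstrap, the sequence $(u_n, m_n)$ is uniformly bounded in $C^{2,\alpha}(\TT^d) \times C^{2,\alpha}(\TT^d)$; combined with the $W^{1,p} \times L^2$-convergence to $(u, m)$, the Arzelà-Ascoli theorem yields (after extracting a subsequence) $u_n \to u$ and $m_n \to m$ in $C^2(\TT^d)$ and, in particular, $g_n \to f'(m)$ uniformly. Applying \cref{lem:fokker_planck} \ref{item:fp_W1p} to the equation for $\rho_n$ with source $\diver(m_n Dv_n)$, where $m_n Dv_n$ is uniformly bounded in $L^p(\TT^d;\RR^d)$ since $p \geq 2$ and $m_n$ is bounded in $L^\infty$, provides a uniform bound on $\rho_n$ in $W^{1,p}(\TT^d)$ (using also that $\rho_n$ is bounded in $L^1$ because of the finite measure of $\TT^d$). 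Injecting this back into the equation for $v_n$, Calderón-Zygmund estimates yield a uniform bound on $v_n$ in $W^{2,p}(\TT^d)$. Since $p = \max\{2, d\}$, both $W^{2,p}(\TT^d) \hookrightarrow W^{1,p}(\TT^d)$ and $W^{1,p}(\TT^d) \hookrightarrow L^2(\TT^d)$ are compact, so a further extraction produces $v_n \to v$ strongly in $W^{1,p}$ and $\rho_n \to \rho$ strongly in $L^2$, with $\norm{v}{W^{1,p}} + \norm{\rho}{L^2} = 1$.

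It remains to pass to the limit in the weak formulations \eqref{eq:mfg_lin_weak_1}-\eqref{eq:mfg_lin_weak_2} satisfied by $(v_n, \rho_n)$ for the perturbed system above; this is routine thanks to the uniform convergence of $Du_n$, $m_n$, and $g_n$ combined with the strong convergence of $(v_n, \rho_n)$. The limit $(v, \rho) \in H^1(\TT^d) \times L^2(\TT^d)$ is thus a weak solution to \eqref{eq:mfg_lin}. \cref{lem:mfg_lin_reg} promotes it to a classical solution, and the stability of $(u, m)$ then forces $(v, \rho) = (0, 0)$, contradicting $\norm{v}{W^{1,p}} + \norm{\rho}{L^2} = 1$. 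The main obstacle in this scheme is the compactness step: one must upgrade the regularity of the normalized differences strictly beyond $W^{1,p} \times L^2$ in order to preserve the normalization in the limit, and the choice $p = \max\{2, d\}$ is precisely what makes the interplay of the Fokker-Planck estimate in \cref{lem:fokker_planck} \ref{item:fp_W1p} with elliptic regularity self-consistent in the bootstrap.
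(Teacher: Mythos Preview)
Your argument is correct and follows the same contradiction--normalization--compactness scheme as the paper's proof in Appendix~\ref{section:isolated}. The difference lies in how much regularity you feed into the compactness step. You invoke the uniform $C^{2,\alpha}$ a~priori bounds on classical solutions (via \cref{prop:estimates_mfg} and a Schauder bootstrap, essentially repeating the estimates in Appendix~\ref{section:well_posed}) to upgrade the $W^{1,p}\times L^2$ convergence of $(u_n,m_n)$ to $C^2$ convergence; with all coefficients of the perturbed linear system converging uniformly, \cref{lem:fokker_planck}\ref{item:fp_W1p} and Calder\'on--Zygmund immediately give $W^{2,p}\times W^{1,p}$ bounds on $(v_n,\rho_n)$, hence strong $W^{1,p}\times L^2$ convergence preserving the normalization, and the limit passage becomes trivial. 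The paper deliberately works with less: it only uses the uniform bound $\norm{Du_n}{L^\infty}\le K$ and $\norm{m}{L^\infty}$ from \cref{prop:estimates_mfg}, obtains merely $H^2\times H^1$ bounds on $(v_n,\rho_n)$, and must then pass to the limit through a delicate combination of weak $W^{1,d}$ convergence and strong subcritical convergence, treating each nonlinear term separately (the three claims in \eqref{eq:isolated_2}). Your route is shorter and cleaner for the problem as stated; the paper's route is more robust in that it would still apply to sequences of solutions known to converge only in $W^{1,p}\times L^2$ without any uniform higher-order control.
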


The next result provides two sufficient conditions for the stability of solutions to \eqref{eq:mfg}.

\begin{thm}\label{thm:stability}
 Let $f \in C_b^{1}(\RR)$ and $(u,m)$ be a classical solution to \eqref{eq:mfg}. Then $(u,m)$ is stable provided that one of the following conditions holds:
\begin{enumerate}[label={\rm(\roman*)}]
\item{\rm(Monotonicity of the coupling)} $f' \geq 0$.
\item{\rm(Large discount factor)} $\lambda > \Lambda$, where
\begin{equation}\label{eq:Lambda}
 \Lambda :=\max \left \{2 M, \frac{K^2}{2} + \norm{m_0}{L^\infty} \norm{f'}{L^\infty} \right \}
\end{equation}
and the constants $M$ and $K$ is given in \cref{prop:estimates_mfg}.
\end{enumerate}

\end{thm}

\begin{proof}
  Let $(v,\rho)$ be a weak solution to \eqref{eq:mfg_lin}. Using \cref{lem:mfg_lin_reg} we may assume that $(v,\rho) \in C^2(\TT^d) \times C^2(\TT^d)$. Using $\rho$ as a test-function in \eqref{eq:mfg_lin_weak_1} and $v$ as a test-function in \eqref{eq:mfg_lin_weak_2} we get
  \begin{equation}\label{eq:proof_injective_1}
  \int D v \cdot D \rho + \rho Du \cdot Dv + \lambda v \rho \ dx = \int f'(m) \rho^2 \ dx,
 \end{equation}
 and
 \begin{equation}\label{eq:proof_injective_2}
  \int D \rho \cdot D v + \rho Du \cdot Dv + \lambda \rho v \ dx = \int -m \module{Dv}^2  \ dx.
 \end{equation}
 Subtracting \eqref{eq:proof_injective_2} from \eqref{eq:proof_injective_1} we obtain
 \begin{equation}\label{eq:existence_stable_solutions_1}
  \int m \module{Dv}^2 \ dx = - \int f'(m) \module{\rho}^2 \ dx.
 \end{equation}
 In the case where $f'\geq 0$, and since $m$ is nonnegative (see \cref{lem:fokker_planck}), this implies that $Dv = 0$ on the set where $m > 0$, in particular $mDv = 0$ on $\TT^d$. Using \cref{lem:fokker_planck}, we deduce that $\rho = 0$ on $\TT^d$. Using the uniqueness of the solution to the equation satisfied by $v$, we conclude that $v = 0$.

 We now assume that $\lambda > \Lambda$. Notice that this and \cref{prop:estimates_mfg} yield $\norm{m}{L^\infty} \leq 2\norm{m_0}{L^\infty}$. Then, using $\rho$ as a test function in \eqref{eq:mfg_lin_weak_2}, it follows from \eqref{eq:u_lipschitz}, the positivity of $m$ and \eqref{eq:existence_stable_solutions_1}, that
 \begin{align*}
  \int \module{D\rho}^2 & + \lambda \module{\rho}^2 \ dx \leq \norm{Du}{L^\infty} \norm{\rho}{L^2}\norm{D\rho}{L^2} + \norm{D \rho}{L^2} \norm{mDv}{L^2} \\
  & \quad \leq K \norm{\rho}{L^2}\norm{D\rho}{L^2} + \norm{m}{L^\infty}^{1/2}\norm{D \rho}{L^2} \norm{m^{1/2}Dv}{L^2} \\
  & \quad \leq K \norm{\rho}{L^2}\norm{D\rho}{L^2} + \norm{m}{L^\infty}^{1/2}\norm{D \rho}{L^2} \left(- \int f'(m)\module{\rho}^2 \right)^{1/2} \\
  & \quad \leq \norm{D\rho}{L^2}^2 + \frac{K^2}{2}\norm{\rho}{L^2}^2 + \frac{1}{2}\norm{f'}{L^\infty} \norm{m}{L^\infty} \norm{\rho}{L^2}^2 \\
  & \quad \leq \norm{D\rho}{L^2}^2 + \left (\frac{K^2}{2} + \norm{m_0}{L^\infty} \norm{f'}{L^\infty} \right) \norm{\rho}{L^2}^2.
 \end{align*}
 From our assumption on $\lambda$, we obtain $\norm{\rho}{L^2} = 0$ and the conclusion follows as in the first case.

\end{proof}

\section{Reformulation of the MFG system}
\label{section:Reformulation}
We are now going to reformulate \eqref{eq:mfg} as an abstract equation $F(u,m) = 0$.
In order to do this let us introduce an additional assumption which will be useful to state some results in a general form.
\begin{enumerate}[label=\bf{(H)}] 
\item \label{H:banach} $X$ and $Z$ are Banach spaces such that
\begin{equation}\label{eq:condition_banach}
  C^{2,\alpha}(\TT^d) \times C^{2,\alpha}(\TT^d)\subset X \subset H^1(\TT^d) \times L^2(\TT^d) \quad \textnormal{and} \quad Z \subset L^1(\TT^d) \times (W^{1,\infty}(\TT^d))'.
\end{equation}
with continuous embeddings. For every $(v,\rho)\in X$, we have that
\begin{equation}\label{eq:G}
 G(v,\rho):= \left ( \frac{1}{2} \module{Dv}^2 - f(\rho), - \diver \left (\rho Du \right ) - \lambda m_0 \right )\quad\text{belongs to } Z 
\end{equation}
and, for every $(\xi,\zeta)\in Z$, the equation
\begin{equation}\label{eq:T}
 \begin{cases}
  - \Delta v + \lambda v = \xi & \quad \textnormal{in } \TT^d, \\
  - \Delta \rho + \lambda \rho = \zeta  & \quad \textnormal{in } \TT^d,
 \end{cases}
\end{equation}
admits a unique distributional solution $T(\xi,\zeta)$ that belongs to $X$.
\end{enumerate}
Assumption \ref{H:banach} allows to define two mappings $G\colon X\to Z$ and $T\colon Z\to X$ such that, at least formally, $(u,m)$ solves~\eqref{eq:mfg} if and only 
\begin{equation}
\label{eq:mfg_abstract}
 F(u,m) : = (u,m) + T \left (G(u,m) \right ) = 0.
\end{equation}
This is made rigorous in the following result. 

\begin{prop}
Assume that $f \in W^{1,\infty}(\RR)$ and that \ref{H:banach} holds. A pair $(u,m) \in  C^{2,\alpha}(\TT^d) \times C^{2,\alpha}(\TT^d)$ solves \eqref{eq:mfg} if and only if it satisfies \eqref{eq:mfg_abstract}.
\end{prop}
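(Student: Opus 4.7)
The plan is to exhibit \eqref{eq:mfg_abstract} as the fixed point equation obtained by isolating the linear operator $-\Delta+\lambda I$ on each equation of \eqref{eq:mfg}, moving all nonlinear terms to the right-hand side, and inverting $-\Delta+\lambda I$ via $T$; uniqueness of the distributional solution in \ref{H:banach} then forces the two formulations to coincide.

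First, I would check that all the objects appearing in \eqref{eq:mfg_abstract} are well-defined for $(u,m)\in C^{2,\alpha}(\TT^d)\times C^{2,\alpha}(\TT^d)$. By \ref{H:banach}, $C^{2,\alpha}(\TT^d)\times C^{2,\alpha}(\TT^d)\subset X$, so $G(u,m)\in Z$ and $T(G(u,m))\in X$; hence $F(u,m)$ is an element of $X$ and the equation $F(u,m)=0$ makes sense.

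For the direct implication, suppose $(u,m)$ is a classical solution to \eqref{eq:mfg}. I would rewrite the system as
\begin{equation*}
\begin{cases}
-\Delta u+\lambda u=f(m)-\tfrac{1}{2}|Du|^2 & \text{in }\TT^d,\\
-\Delta m+\lambda m=\lambda m_0+\diver(mDu) & \text{in }\TT^d,
\end{cases}
\end{equation*}
and observe that the right-hand side is exactly $-G(u,m)$. Since $u,m\in C^{2,\alpha}(\TT^d)$ the identities above hold pointwise; multiplying by a test function in $C^\infty(\TT^d)$ and integrating by parts shows that $(u,m)$ is a distributional solution of \eqref{eq:T} with source $-G(u,m)\in Z$. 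By the uniqueness asserted in \ref{H:banach}, $(u,m)=T(-G(u,m))=-T(G(u,m))$, which is \eqref{eq:mfg_abstract}.

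Conversely, if $(u,m)\in C^{2,\alpha}(\TT^d)\times C^{2,\alpha}(\TT^d)$ satisfies $F(u,m)=0$, then $(u,m)=-T(G(u,m))$ is, by definition of $T$, the distributional solution of \eqref{eq:T} with right-hand side $-G(u,m)$. Because $(u,m)$ has classical $C^{2,\alpha}$ regularity, $-\Delta u$ and $-\Delta m$ are continuous and the distributional equations upgrade to pointwise identities; rearranging yields precisely \eqref{eq:mfg}. The only minor technical point is that $\diver(mDu)$ must be interpreted consistently in the classical and distributional senses, which is immediate since $m,Du\in C^{1,\alpha}(\TT^d)$ makes $mDu$ continuously differentiable. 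Overall the proposition is a straightforward translation between pointwise PDE form and the abstract fixed-point form, with no substantial obstacle beyond invoking the uniqueness clause of \ref{H:banach}.
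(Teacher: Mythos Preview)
Your proposal is correct and follows essentially the same approach as the paper: both rewrite \eqref{eq:mfg} in the form $(-\Delta+\lambda I)(u,m)=-G(u,m)$ and then invert via $T$ (using the uniqueness in \ref{H:banach}), and both obtain the converse by applying $-\Delta+\lambda I$ to the abstract equation. Your version is slightly more explicit about the passage between classical and distributional solutions, but the underlying argument is identical.
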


\begin{proof}
 Assume that $(u,m) \in  C^{2,\alpha}(\TT^d) \times C^{2,\alpha}(\TT^d)$ is a solution to \eqref{eq:mfg}. We can rewrite \eqref{eq:mfg} as
 \[
   \begin{cases}
    - \Delta u + \lambda u = f(m) -  \frac{1}{2} \module{Du}^2 \quad & \textnormal{in } \TT^d, \\
    - \Delta m  + \lambda m = \lambda m_0 + \diver \left ( m Du \right ) \quad & \textnormal{in } \TT^d,
 \end{cases}
 \]
 or, with an obvious abuse of notation,
 \begin{equation}\label{eq:equivalence}
  (-\Delta + \lambda I) (u,m) = -G(u,m).
 \end{equation}
 We can apply $T = (-\Delta + \lambda I)^{-1}$ on both sides of \eqref{eq:equivalence} and use the linearity of $T$ to obtain that
 \[
  (u,m) = -T(G(u,m)).
 \]
 Conversely if $(u,m)$ satisfies \eqref{eq:mfg_abstract} we can apply the operator $(-\Delta + \lambda I)$ to get \eqref{eq:equivalence}.
\end{proof}

Let now $(u,m)$ be a classical solution to \eqref{eq:mfg} and $X$ and $Z$ be Banach spaces such that \ref{H:banach} holds and $(u,m) \in X$. Formally, the Fréchet differential of the mapping $F$ defined in \eqref{eq:mfg_abstract} at $(u,m)$ is given by
\begin{equation}\label{eq:diff_F}
 dF[u,m](v,\rho) := (v,\rho) + T \left (dG[u,m](v,\rho) \right)
\end{equation}
where
\begin{equation}\label{eq:dG}
 dG[u,m](v,\rho) = \left (Du \cdot Dv - f'(m) \rho, -\diver \left (\rho Du \right) - \diver \left (m Dv \right) \right ).
\end{equation}
The rigorous proof of Fréchet differentiability will be made in \cref{section:differentiability} below.
Notice that $(v,\rho)$ solves \eqref{eq:mfg_lin} if and only if
\begin{equation}
 dF[u,m](v,\rho) = 0.
\end{equation}

\begin{thm}[Isomorphism property of stable solutions]\label{thm:isom}
 Let $f \in C_b^1(\RR)$ and let $X$ and $Z$ be Banach spaces such that \ref{H:banach} holds and let $Y$ be a Banach space such that
 \[
  Y \subset X \subset H^1(\TT^d) \times L^2(\TT^d) \quad \textnormal{with} \quad Y \subset X \textnormal{ compact.}
 \]
 Assume also that the mapping $G \colon X \to Z$ defined by \eqref{eq:G} is continuously differentiable and that $T \in \cL(Z,Y)$. Then the mapping $F\colon X \to X$ defined by \eqref{eq:mfg_abstract} is continuously differentiable with $dF = I + T\circ dG$, and, for every stable solution $(u,m) \in C^{2,\alpha}(\TT^d) \times C^{2,\alpha}(\TT^d)$ to \eqref{eq:mfg}, the linear operator $dF[u,m]$ is an isomorphism on $X$.
\end{thm}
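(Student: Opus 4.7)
The differentiability statement is essentially a direct application of the chain rule. The operator $T$ is linear and, by composing $T\in\cL(Z,Y)$ with the continuous embedding $Y\hookrightarrow X$, also bounded as a map $Z\to X$. Since $G\colon X\to Z$ is $C^1$ by assumption, the composition $T\circ G\colon X\to X$ is $C^1$ with differential $T\circ dG[\cdot]$, whence $dF[\cdot] = I + T\circ dG[\cdot]$. The continuity of $dF$ in the operator norm follows from that of $dG$ and the boundedness of $T$.

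For the isomorphism claim at a stable solution $(u,m)$, the idea is to realize $dF[u,m]$ as a compact perturbation of the identity and invoke the Fredholm alternative. Indeed, $T\circ dG[u,m]$ admits the factorization
\[
 X \xrightarrow{\,dG[u,m]\,} Z \xrightarrow{\,T\,} Y \hookrightarrow X,
\]
where the last arrow is compact by hypothesis. Hence $T\circ dG[u,m]\in\cL(X)$ is compact, and by Fredholm theory (see, e.g., \cite{B2011}) it suffices to establish that $\ker dF[u,m]=\{0\}$.

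To prove injectivity, suppose $(v,\rho)\in X$ satisfies $dF[u,m](v,\rho)=0$, i.e. $(v,\rho) = -T\bigl(dG[u,m](v,\rho)\bigr)$. Unwinding the definitions \eqref{eq:T} and \eqref{eq:dG} (using the distributional uniqueness clause of \ref{H:banach}), this identity is equivalent to the fact that $(v,\rho)$ solves the linearized system \eqref{eq:mfg_lin} in the sense of distributions. Since by \ref{H:banach} the embedding $X\hookrightarrow H^1(\TT^d)\times L^2(\TT^d)$ is continuous, we actually have $(v,\rho)\in H^1(\TT^d)\times L^2(\TT^d)$, so a density argument with smooth test functions upgrades the distributional formulation to the weak formulation \eqref{eq:mfg_lin_weak_1}--\eqref{eq:mfg_lin_weak_2}. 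Applying \cref{lem:mfg_lin_reg} then yields $(v,\rho)\in C^{2,\alpha}(\TT^d)\times C^{2,\alpha}(\TT^d)$ and $(v,\rho)$ is a classical solution of \eqref{eq:mfg_lin}. The stability of $(u,m)$ (\cref{def:stable}) forces $(v,\rho)=(0,0)$, completing the proof.

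The main obstacle is the injectivity step, and specifically the bookkeeping required to translate the abstract equation $(v,\rho)+T(dG[u,m](v,\rho))=0$ in $X$ into the PDE weak formulation needed to apply \cref{lem:mfg_lin_reg}. This is not deep but must be carried out carefully: one has to match the signs and the divergence structure of the components of $dG[u,m]$ with those of \eqref{eq:mfg_lin}, and to invoke the uniqueness part of \ref{H:banach} so that the distributional interpretation of $T$ coincides with the PDE one. The rest of the argument (chain rule, compactness, Fredholm) is essentially formal once the abstract framework is in place.
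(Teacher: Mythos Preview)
Your proposal is correct and follows essentially the same route as the paper: chain rule for differentiability, compactness of $T\circ dG[u,m]$ via the factorization through the compact embedding $Y\hookrightarrow X$, and Fredholm's alternative reducing the isomorphism claim to injectivity, which is obtained by invoking \cref{lem:mfg_lin_reg} to upgrade any element of the kernel to a classical solution of \eqref{eq:mfg_lin} and then applying the definition of stability. The paper's presentation of the injectivity step is slightly more terse (it argues injectivity on $H^1\times L^2$ directly and then restricts to $X$), while you spell out the translation from the abstract equation in $X$ to the weak PDE formulation, but the substance is the same.
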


\begin{proof}
 The differentiability of $F$ is a direct consequence of the chain rule and the differentiability of $G$. If $(u,m)  \in C^{2,\alpha}(\TT^d) \times C^{2,\alpha}(\TT^d)$ is a stable solution to \eqref{eq:mfg}, then $(I + T \circ dG[u,m])$ is injective on $C^{2,\alpha}(\TT^d) \times C^{2,\alpha}(\TT^d)$. By \cref{lem:mfg_lin_reg} we know that any weak solution $(v,\rho) \in H^1(\TT^d) \times L^2(\TT^d)$ to \eqref{eq:mfg_lin} belongs to $C^{2,\alpha}(\TT^d) \times C^{2,\alpha}(\TT^d)$. Hence the operator $(I + T \circ dG[u,m])$ is also injective on $H^1(\TT^d) \times L^2(\TT^d)$, in particular it is injective on $X$.

 Notice now that since $dG[u,m] \in \cL(X,Z)$ and $T \in \cL(Z,Y)$ we have that $T \circ dG[u,m] \in \cL(X,Y)$ and since $Y \subset X$ is compact we deduce that the operator $T \circ dG[u,m] \in \cL(X)$ is compact. Using Fredholm's alternative, we conclude that $dF[u,m]$ is an isomorphism on $X$.
\end{proof}

We now provide two concrete examples of Banach spaces $X$, $Y$, and $Z$, satisfying \ref{H:banach} and the assumptions of \cref{thm:isom}. These examples will constitute the building blocks of the applications studied in \cref{section:applications}.

\begin{ex}\label{ex:holder}
 Let $f \in C^1_b(\RR)$, let $0 < \gamma < \beta < \alpha$ and set
 \[
  X = C^{2,\beta}(\TT^d) \times C^{2,\gamma}(\TT^d), \quad Y = C^{2,\alpha} (\TT^d)\times C^{2,\beta}(\TT^d) \quad \textnormal{and} \quad Z = C^{0,\alpha} (\TT^d)\times C^{0,\beta}(\TT^d)
 \]
 where $\alpha$ is fixed in \eqref{h:m0}.

 Let us check that \ref{H:banach} and the assumptions of \cref{thm:isom} are satisfied in this case. From the Arzela-Ascoli theorem we have the compact embedding of $Y$ into $X$. It is also clear that $X \subset H^1(\TT^d) \times L^2(\TT^d)$. We fix some stable solution $(u,m) \in C^{2,\alpha}(\TT^d) \times C^{2,\alpha}(\TT^d)$ to \eqref{eq:mfg}. From the Schauder estimates \cite[Corollary 6.3]{GT2001} we have $T \in \cL(Z,Y)$. It remains to prove that $G \colon X \to Z$ is well-defined. First notice that
 \[
  \norm{\module{Dv}^2}{C^{0,\alpha}} \leq 2 \norm{Dv}{C^{0,\alpha}}^2 \leq C \norm{v}{C^{2,\beta}}^2.
 \]
 Since $f$ is Lipschitz continuous we also have
 \[
  \norm{f(\rho)}{C^{0,\alpha}} \leq \norm{f}{W^{1,\infty}} \norm{\rho}{C^{0,\alpha}} \leq C \norm{f}{W^{1,\infty}} \norm{\rho}{C^{2,\gamma}}
 \]
 Finally, writting
 \[
  \diver(\rho Dv) = D\rho \cdot Dv + \rho \Delta v,
 \]
 we obtain that
 \[
  \norm{\diver(\rho Dv)}{C^{0, \beta}} \leq 2 \norm{D\rho}{C^{0,\beta}} \norm{Dv}{C^{0, \beta}} + 2 \norm{\rho}{C^{0, \beta}} \norm{\Delta v}{C^{0, \beta}} \leq C \norm{\rho}{C^{2,\gamma}}\norm{v}{C^{2,\beta}}.
 \]
 In conclusion we have
 \[
  \norm{G(v,\rho)}{Z} \leq C\left (\norm{v}{C^{2,\beta}}^2 + \norm{\rho}{C^{2,\gamma}}^2 \right) \leq C \norm{(v,\rho)}{X}^2,
 \]
 and the mapping $G$ is therefore well defined from $X$ to $Z$.

  In the case where $G$ is also continuously differentiable we have $dG \colon X \to \cL(X,Z)$ and we can apply \cref{thm:isom} to conclude that $dF[u,m]$ is an isomorphism on $C^{2,\beta}(\TT^d) \times C^{2,\gamma}(\TT^d)$.
\end{ex}

\begin{ex}\label{ex:sobolev}
    Let $f \in C^1_b(\RR)$ and
   \[
    X = W^{1,p}(\TT^d) \times L^q(\TT^d), \quad Y = W^{2,p/2}(\TT^d) \times W^{1,r}(\TT^d) \quad \textnormal{and} \quad Z = L^{p/2}(\TT^d) \times W^{-1, r'}(\TT^d)
   \]
   where $d < p,q < \infty$, $r, r' > 1$ are such that $1/r \geq 1/p + 1/q$, $1/r + 1/r' = 1$ and we have a compact embedding $W^{1,r}(\TT^d) \hookrightarrow L^q(\TT^d)$.

  We now verify that \ref{H:banach} and the assumptions in \cref{thm:isom} are satisfied. Since we assume $p > d$  the Rellich-Kondrachov theorem gives the compact embedding $W^{2,p/2}(\TT^d) \hookrightarrow W^{1,p}(\TT^d)$. It follows that there is a compact embedding from $Y$ into $X$.
   Using $W^{2,p}$ estimates \cite[Theorem 9.11]{GT2001} and the $W^{1,p}$ estimate \eqref{eq:W1p_estimate} from \cref{lem:fokker_planck}, and using \cref{rem:dual_sobolev}, we have that $T \in \cL(Z,W^{2,p/2}(\TT^d) \times W^{1,r}(\TT^d))$. Therefore $T \in \cL(Z,Y) \subset \cL(Z,X)$. Moreover is is easy to check that $G \colon X \to Z$ is well defined.

   Therefore in the case where $G$ is continuously differentiable we obtain from \cref{thm:isom} that $dF[u,m]$ is an isomorphism on $W^{1,p}(\TT^d) \times L^q(\TT^d)$.
\end{ex}

We now turn to the particular case of a monotone coupling and prove that the unique classical solution to \eqref{eq:mfg}, which we also know to be stable by \cref{thm:stability}, satisfies a stronger isomorphism property than the one resulting from the direct application of \cref{thm:isom}.

\begin{thm}[Isomorphism property for monotone couplings]\label{thm:mon_isom}
 Let $f \in C_b^{1}(\RR) \cap C^{1,1}_{\loc}(\RR)$. Assume that $f' \geq 0$ and let $(u,m) \in C^{2,\alpha}(\TT^d) \times C^{2,\alpha}(\TT^d)$, where $\alpha$ is fixed in \eqref{h:m0}, be the unique classical solution to \eqref{eq:mfg}. Then \ref{H:banach} holds for $X = C^{2,\alpha}(\TT^d) \times C^{2,\alpha}(\TT^d)$ and $Z = C^{0,\alpha}(\TT^d) \times C^{0,\alpha}(\TT^d)$.
 Moreover, assume that $G$ in \ref{H:banach} is continuously differentiable and define $F$ according to \eqref{eq:mfg_abstract}. Then $dF[u,m]$ is an isomorphism on $X$.
\end{thm}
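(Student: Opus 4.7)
The plan has two parts. First I would verify that hypothesis \ref{H:banach} holds for $X = C^{2,\alpha}(\TT^d) \times C^{2,\alpha}(\TT^d)$ and $Z = C^{0,\alpha}(\TT^d) \times C^{0,\alpha}(\TT^d)$. The inclusion $X \subset H^1(\TT^d) \times L^2(\TT^d)$ is immediate, and $Z \subset L^1(\TT^d) \times (W^{1,\infty}(\TT^d))'$ follows from $C^{0,\alpha}(\TT^d) \subset L^\infty(\TT^d)$ together with the continuous pairings $L^\infty \hookrightarrow L^1$ (finite measure on $\TT^d$) and $L^\infty \hookrightarrow (W^{1,\infty})'$. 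The well-definedness of $T\colon Z \to X$ is a componentwise application of the Schauder estimates for $-\Delta + \lambda I$, while $G\colon X \to Z$ is handled by product-in-Hölder-spaces computations identical in spirit to those carried out in \cref{ex:holder}.

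The heart of the result is the isomorphism property of $dF[u,m]$. The obstruction to a direct use of \cref{thm:isom} is that no auxiliary space $Y$ compactly embeds into $X = C^{2,\alpha} \times C^{2,\alpha}$: we are already sitting at the top of the Hölder scale allowed by the assumption $m_0 \in C^{0,\alpha}$. My plan is to bypass this by using \cref{thm:isom} in a slightly weaker space and then bootstrapping. Fix $0 < \gamma < \beta < \alpha$ and apply \cref{ex:holder}; since $(u,m)$ is the unique classical solution and is stable by \cref{thm:stability} under $f'\geq 0$, we deduce that $dF[u,m]$ is an isomorphism on $X_\beta := C^{2,\beta}(\TT^d) \times C^{2,\gamma}(\TT^d)$.

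Given any $(g,h) \in X$, this step produces a unique $(v,\rho) \in X_\beta$ with $dF[u,m](v,\rho) = (g,h)$. The crucial and most delicate step is to upgrade $(v,\rho)$ from $X_\beta$ to $X$ through the identity
\[
 (v,\rho) = (g,h) - T\bigl(dG[u,m](v,\rho)\bigr).
\]
I would handle the two components in order. For $v$, notice that $Du \cdot Dv \in C^{1,\beta} \subset C^{0,1} \subset C^{0,\alpha}$, while $f'(m)\rho$ is Lipschitz: the hypothesis $f \in C^{1,1}_{\loc}(\RR)$ renders $f'$ Lipschitz, hence $f'(m) \in C^{0,1}$ by composition with the Lipschitz function $m$, and $\rho \in C^{2,\gamma}$ is also Lipschitz — so the product lies in $C^{0,1} \subset C^{0,\alpha}$. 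Schauder estimates then place $T_1(\cdot) \in C^{2,\alpha}$, and so $v \in C^{2,\alpha}$. Plugging this improved $v$ into the second component, analogous Hölder-product estimates give $\diver(m Dv), \diver(\rho Du) \in C^{0,\alpha}$, which upgrades $\rho$ to $C^{2,\alpha}$ by Schauder.

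This proves surjectivity of $dF[u,m]\colon X \to X$. Injectivity on $X$ is inherited from injectivity on $X_\beta \supset X$, and the bounded inverse is then provided by the open mapping theorem. The main subtle point in the whole argument, and the reason why the assumption $f \in C^{1,1}_{\loc}(\RR)$ is imposed, is the Lipschitz regularity of $f'(m)$: without it, the product $f'(m)\rho$ would not reach the target exponent $\alpha$ and the bootstrap would stall at the weaker Hölder regularity inherited from $X_\beta$.
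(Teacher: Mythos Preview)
Your approach is genuinely different from the paper's, but there is a gap at the point where you invoke \cref{ex:holder}. That example concludes with the isomorphism on $X_\beta = C^{2,\beta} \times C^{2,\gamma}$ only under the explicit proviso that $G \colon X_\beta \to C^{0,\alpha} \times C^{0,\beta}$ is continuously differentiable. The theorem you are proving, however, assumes differentiability of $G$ only on the \emph{smaller} space $X = C^{2,\alpha} \times C^{2,\alpha}$, and this does not transfer to the larger domain $X_\beta$. Nor does \cref{lem:diff_G_holder} apply: it requires $f \in C^2(\RR)$, whereas the hypothesis here is merely $f \in C^{1,1}_{\loc}(\RR)$. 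The gap is fillable --- one can show that the Nemytskii operator $\rho \mapsto f(\rho)$ is Fr\'echet differentiable from $C^{0,1}$ to $C^{0,\alpha}$ for any $\alpha < 1$ under $f \in C^{1,1}_{\loc}$ via an interpolation estimate of the form $[\,\cdot\,]_\alpha \leq C \lVert \cdot \rVert_\infty^{1-\alpha}[\,\cdot\,]_1^\alpha$ --- but you do not carry this out, and it is precisely the technical point that makes your route viable.

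The paper proceeds entirely differently: it proves surjectivity of $dF[u,m]$ directly by a Leray--Schauder fixed point argument in $L^2$, using the monotonicity $f' \geq 0$ in an essential way to obtain the a priori estimate (the term $\int f'(m)\rho^2$ is discarded thanks to its sign, yielding the key bound $\int m \lvert Dv\rvert^2 \leq C(\lVert\rho\rVert\lVert w\rVert + \lVert v\rVert\lVert\mu\rVert)$, and a duality argument then controls $\rho$). The resulting solution in $H^2 \times H^1$ is bootstrapped to $C^{2,\alpha} \times C^{2,\alpha}$ as in \cref{lem:mfg_lin_reg}. It is worth noting that, once your differentiability gap is patched, your bootstrap never uses $f' \geq 0$ --- only the stability of $(u,m)$ --- and would therefore yield the isomorphism on $C^{2,\alpha} \times C^{2,\alpha}$ for \emph{any} stable solution, a conclusion stronger than the theorem as stated.
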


\begin{proof}
According to \cref{thm:stability}, $(u,m)$ is a stable solution to \eqref{eq:mfg}. Therefore the injectivity of $dF[u,m]$ follows from the definition of stable solutions.

We now prove its surjectivity. More precisely we prove that for every $(w,\mu) \in  C^{2,\alpha}(\TT^d) \times C^{2,\alpha}(\TT^d)$ there exists $(v,\rho) \in  C^{2,\alpha}(\TT^d) \times C^{2,\alpha}(\TT^d)$ such that
\begin{equation}
 dF[u,m](v,\rho) = (w,\mu),
\end{equation}
which is equivalent to showing that there exists a unique solution $(v,\rho) \in  C^{2,\alpha}(\TT^d) \times C^{2,\alpha}(\TT^d)$ to
\begin{equation}
 \begin{cases}
  - \Delta v + Du \cdot Dv + \lambda v = f'(m) \rho - \Delta w + \lambda w &  \quad \textnormal{in } \TT^d, \\
  - \Delta \rho - \diver \left (\rho Du \right) + \lambda \rho = \diver \left (m Dv \right ) - \Delta \mu + \lambda \mu  &  \quad \textnormal{in } \TT^d.
 \end{cases}
\end{equation}

 The argument relies on the Leray-Schauder fixed point theorem and is adapted from \cite[Lemma 3.4]{CDLL2019}. Let us first define the mapping for which we will find a fixed point. Fix some $\rho \in L^2(\TT^d)$. Then there exists a solution $v \in H^2(\TT^d)$ to
 \[
  - \Delta v + Du \cdot Dv + \lambda v = f'(m) \rho - \Delta w + \lambda w \quad \textnormal{in } \TT^d,
 \]
 and a unique weak solution $\tilde \rho \in H^1(\TT^d)$ to
 \[
  - \Delta \tilde \rho - \diver \left (\tilde \rho Du \right) + \lambda \tilde \rho = \diver \left (m Dv \right) - \Delta \mu + \lambda \mu \quad \textnormal{in } \TT^d.
 \]
 This allows us to define a mapping $\Phi \colon L^2(\TT^d) \to L^2(\TT^d)$ by setting $\Phi(\rho) = \tilde \rho$. In order to apply the Leray-Schauder fixed point theorem \cite[Theorem 11.3]{GT2001} we have to prove that the set of solutions $\rho \in L^2(\TT^d)$ to $\rho = \sigma \Phi(\rho)$, where $\sigma \in [0,1]$, is uniformly bounded in $L^2(\TT^d)$. This amounts to prove a uniform bound on the solutions to
 \[
  \begin{cases}
   - \Delta v + Du \cdot Dv + \lambda v = \sigma \left [f'(m) \rho - \Delta w + \lambda w \right ] &  \quad \textnormal{in } \TT^d, \\
  - \Delta \rho - \diver \left (\rho Du \right) + \lambda \rho =\diver \left (m Dv \right ) - \sigma \left [ \Delta \mu - \lambda \mu \right ]  &  \quad \textnormal{in } \TT^d.
  \end{cases}
 \]
 Using $\rho$ as a test function for the equation satisfied by $v$ we obtain that
 \begin{equation}\label{eq:proof_isom_1}
  \int D v \cdot D \rho + \rho Du \cdot Dv + \lambda v \rho \ dx = \sigma \int f'(m) \rho^2 + \rho \left (\lambda w - \Delta w \right )\ dx.
 \end{equation}
 Similarly, using $v$ as a test function for the equation satisfied by $\rho$ we obtain that
 \begin{equation}\label{eq:proof_isom_2}
  \int D \rho \cdot D v + \rho Du \cdot Dv + \lambda \rho v \ dx = \int -m \module{Dv}^2 + \sigma \left (\lambda v \mu + Dv \cdot D\mu \right )\ dx.
 \end{equation}
 Subtracting \eqref{eq:proof_isom_2} to \eqref{eq:proof_isom_1} we get
 \[
  \int m \module{Dv}^2 \ dx = \sigma \int - f'(m) \rho^2 - \rho \left (\lambda w - \Delta w \right ) + \lambda v \mu +Dv \cdot D\mu \ dx.
 \]
 The positivity of $f'$ then yields
 \begin{equation}\label{eq:proof_isom_3}
  \int m \module{Dv}^2 \ dx \leq C \left (\norm{\rho}{L^2} \norm{w}{H^2} + \norm{v}{H^1} \norm{\mu}{H^1} \right ).
 \end{equation}

 Let now $\xi \in H^{-1}(\TT^d)$ and $z \in H^1(\TT^d)$ be the unique weak solution to
 \[
  - \Delta z + Du \cdot Dz  + \lambda z = \xi \quad \textnormal{in } \TT^d.
 \]
Using $z$ as a test function for the equation satisfied by $\rho$  and recalling that $m \geq 0$ (see \cref{lem:fokker_planck}-\ref{item:fp_positive}) we get
\begin{align*}
 \langle \xi, \rho \rangle_{H^{-1},H^1}  & = \int m Dv \cdot Dz + \sigma \left ( \lambda z \mu + D\mu \cdot Dz \right ) \ dx \\
 & \leq C  \left ( \norm{m^{1/2} Dz}{L^2} \norm{m^{1/2}Dv}{L^2} + \norm{z}{H^1} \norm{\mu}{H^1} \right ) \\
 & \leq C \norm{z}{H^1} \left ( \left ( \norm{\rho}{L^2} \norm{w}{H^2} + \norm{v}{H^1} \norm{\mu}{H^1} \right)^{1/2} + \norm{\mu}{H^1} \right ) \\
 & \leq C \norm{\xi}{H^{-1}} \left ( \left ( \norm{\rho}{L^2} \norm{w}{H^2} + \norm{v}{H^1} \norm{\mu}{H^1} \right)^{1/2} + \norm{\mu}{H^1} \right )
\end{align*}
where we used \eqref{eq:proof_isom_3} to obtain the second inequality. Since $\xi$ is arbitrary, we deduce by duality that
\[
 \norm{\rho}{H^1} \leq C \left ( \left ( \norm{\rho}{L^2} \norm{w}{H^2} + \norm{v}{H^1} \norm{\mu}{H^1}\right)^{1/2} + \norm{\mu}{H^1} \right ),
\]
and hence, by Young's inequality, we get
\[
 \norm{\rho}{H^1} \leq C \left ( \norm{w}{H^2} + \norm{v}{H^1}^{1/2}\norm{\mu}{H^1}^{1/2} + \norm{\mu}{H^1} \right ).
\]
From elliptic regularity we know that
\begin{align*}
 \norm{v}{H^2} & \leq C \left (\norm{\rho}{L^2} + \norm{w}{H^2} \right ) \leq C \left ( \norm{w}{H^2} + \norm{v}{H^1}^{1/2}\norm{\mu}{H^1}^{1/2} + \norm{\mu}{H^1} \right ),
\end{align*}
and hence, after another application of Young's inequality, we get a uniform bound on $v$ in $H^2(\TT^d)$
\[
 \norm{v}{H^2} \leq C \left ( \norm{w}{H^2} + \norm{\mu}{H^1} \right ).
\]

Using standard $H^1$ estimates we can now deduce a uniform on $\rho$ in $H^1(\TT^d)$
\[
 \norm{\rho}{H^1} \leq C \left (\norm{w}{H^2} + \norm{\mu}{H^1} \right ).
\]

The compactness on $\Phi$ then follows from the Rellich-Kondrachov theorem and we can therefore apply the Leray-Schauder fixed point theorem to obtain a pair $(v;\rho) \in H^2(\TT^d) \times H^1(\TT^d)$ solving \eqref{eq:mfg_lin}.

Using a bootstrap argument similar to the one used in \cref{lem:mfg_lin_reg}, we can obtain that $(v,\rho) \in C^{2,\alpha}(\TT^d) \times C^{2,\alpha}(\TT^d)$.

\end{proof}

\section{Differentiability of the mapping $F$}
\label{section:differentiability}

 In this section we prove the differentiability of the mapping $F$ defined by \eqref{eq:mfg_abstract} in both Hölder and Sobolev spaces. This corresponds to the situations considered in \cref{ex:holder,ex:sobolev}.

\subsection{Differentiability in Hölder spaces}

The following proposition summarizes the differentiability properties of the Nemytskii operator on Hölder spaces which we will use to prove the differentiability of the mapping $G$.
\begin{prop}\label{lem:f_diff_holder}
 Assume that $h \in C^2(\RR)$ and let $\beta \in (0,1]$. Then the Nemytskii operator $H \colon C^{0,\beta}(\TT^d) \to C^{0,\beta}(\TT^d)$ defined by
 \[
  H[u](x) = h(u(x)) \quad \textnormal{for } x \in \TT^d
 \]
 is continuously differentiable and $dH[u](v) = h'(u)v$. In particular, for every $\gamma \in (0,1]$ the mapping
 \[
  C^{2,\gamma}(\TT^d) \ni u \mapsto H[u] \in C^{0,\beta}(\TT^d)
 \]
 is also continuously differentiable. Furthermore, if $h \in C^{2,1}_{\loc}(\RR)$, then $dH$ is locally Lipschitz continuous in $\cL(C^{2,\gamma}(\TT^d), C^{0,\beta}(\TT^d))$.
\end{prop}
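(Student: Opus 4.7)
My plan is to establish the three assertions in sequence: well-definedness and identification of the candidate differential, Fréchet differentiability, then continuity (and local Lipschitz continuity) of the differential.

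I first verify that $H$ maps $C^{0,\beta}(\TT^d)$ into itself and identify the candidate differential. For any $u \in C^{0,\beta}(\TT^d)$, the range $u(\TT^d)$ is compact, so $h$ and $h'$ are bounded on a neighbourhood of it. The mean value theorem gives $[H[u]]_\beta \leq \norm{h'}{L^\infty(\overline{u(\TT^d)})} [u]_\beta$ and $\norm{H[u]}{L^\infty}\leq \norm{h}{L^\infty(\overline{u(\TT^d)})}$, so $H[u] \in C^{0,\beta}(\TT^d)$. The same reasoning shows $h'(u) \in C^{0,\beta}(\TT^d)$, and since $C^{0,\beta}(\TT^d)$ is a Banach algebra, the multiplication $v \mapsto h'(u) v$ is a bounded linear operator on $C^{0,\beta}(\TT^d)$, which is the candidate differential.

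For Fréchet differentiability at $u$, I start from the pointwise identity
\[
H[u+v](x) - H[u](x) - h'(u(x))v(x) = v(x) \int_0^1 [h'(u(x) + \tau v(x)) - h'(u(x))]\, d\tau,
\]
which, together with the Banach algebra inequality, yields
\[
\norm{H[u+v] - H[u] - h'(u)v}{C^{0,\beta}} \leq C\norm{v}{C^{0,\beta}} \sup_{\tau \in [0,1]} \norm{h'(u+\tau v) - h'(u)}{C^{0,\beta}}.
\]
This reduces both the Fréchet differentiability and the continuity of $u \mapsto dH[u] \in \cL(C^{0,\beta}(\TT^d))$ to showing that the Nemytskii operator $u \mapsto h'(u)$ is continuous from $C^{0,\beta}(\TT^d)$ to itself.

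I expect this last continuity statement to be the main obstacle: since $h''$ is only assumed to be continuous (and not Hölder), one cannot estimate $h'(u_n) - h'(u)$ by a naive use of the Banach algebra that would require a control of $[h''(\cdot)]_\beta$. The key is a mixed decomposition. Setting $w_n = u_n - u$ and $F(a,b) := h'(a+b) - h'(a)$, one splits the increment $F(u(x), w_n(x)) - F(u(y), w_n(y))$ as $[F(u(x),w_n(x)) - F(u(y),w_n(x))] + [F(u(y), w_n(x)) - F(u(y), w_n(y))]$. The first bracket has an integrand of the form $[h''(a_t + w_n(x)) - h''(a_t)](u(x) - u(y))$ and is bounded in absolute value by $\omega_{h''}(\norm{w_n}{L^\infty})|u(x) - u(y)|$, where $\omega_{h''}$ is a modulus of continuity of $h''$ on a bounded interval; the second bracket has an integrand of the form $h''(\cdot)(w_n(x) - w_n(y))$ and is bounded by $\norm{h''}{L^\infty} |w_n(x) - w_n(y)|$. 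Dividing by $|x-y|^\beta$ one gets
\[
[h'(u_n) - h'(u)]_\beta \leq \omega_{h''}(\norm{w_n}{L^\infty}) [u]_\beta + \norm{h''}{L^\infty} [w_n]_\beta,
\]
and the $L^\infty$ bound $\norm{h'(u_n) - h'(u)}{L^\infty} \leq \norm{h''}{L^\infty}\norm{w_n}{L^\infty}$ is immediate; both tend to zero as $u_n \to u$ in $C^{0,\beta}(\TT^d)$.

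Finally, the $C^1$ statement for $u \mapsto H[u]$ from $C^{2,\gamma}(\TT^d)$ to $C^{0,\beta}(\TT^d)$ follows from the chain rule applied to the continuous embedding $C^{2,\gamma}(\TT^d) \hookrightarrow C^{0,\beta}(\TT^d)$. Under the stronger assumption $h \in C^{2,1}_{\loc}(\RR)$, the modulus $\omega_{h''}$ above can be replaced by a local Lipschitz constant of $h''$, so the previous estimate upgrades to $\norm{h'(u_1) - h'(u_2)}{C^{0,\beta}} \leq C(R) \norm{u_1 - u_2}{C^{0,\beta}}$ on any ball of radius $R$ in $C^{0,\beta}(\TT^d)$. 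Combined with $\norm{dH[u_1] - dH[u_2]}{\cL(C^{0,\beta})} \leq C\norm{h'(u_1) - h'(u_2)}{C^{0,\beta}}$, this yields the claimed local Lipschitz continuity of $dH$.
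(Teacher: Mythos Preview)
Your proof is correct and self-contained, whereas the paper's proof simply cites external results: it invokes \cite[Theorem 4.1]{N1993} for the continuous differentiability of $H$ on $C^{0,\beta}(\TT^d)$, deduces the $C^{2,\gamma}\to C^{0,\beta}$ statement by composing with the continuous injection $J\colon C^{2,\gamma}(\TT^d)\hookrightarrow C^{0,\beta}(\TT^d)$ (exactly as you do), and for the local Lipschitz part reduces to showing that $u\mapsto h'(u)$ is locally Lipschitz from $C^{2,\gamma}$ to $C^{0,\beta}$, which it obtains from \cite[Theorem 3.1]{N1993}. Your mixed-decomposition estimate
\[
 [h'(u_1)-h'(u_2)]_\beta \leq \omega_{h''}(\norm{u_1-u_2}{L^\infty})[u_2]_\beta + \norm{h''}{L^\infty}[u_1-u_2]_\beta
\]
is precisely the elementary computation that underlies those cited results; in particular it handles the delicate point that $h\in C^2$ gives only a modulus of continuity (not H\"older control) on $h''$, which a naive Banach-algebra bound on $h'(u_1)-h'(u_2)$ would miss. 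The payoff of your route is a fully self-contained argument; the payoff of the paper's route is brevity. Both reductions for the Lipschitz statement are identical: bound $\norm{dH[u_1]-dH[u_2]}{\cL(C^{2,\gamma},C^{0,\beta})}$ by $C\norm{h'(u_1)-h'(u_2)}{C^{0,\beta}}$ and then use local Lipschitz continuity of $u\mapsto h'(u)$.
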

\begin{proof}
 The first statement is proved in \cite[Theorem 4.1]{N1993} (this is where we need the $C^2$ assumption on $f$). For the second one let us write $J \in \cL(C^{2,\gamma}(\TT^d), C^{0,\beta}(\TT^d))$ for the natural injection of $C^{2,\gamma}(\TT^d)$ into $C^{0,\beta}(\TT^d)$. It is enough to notice that $H \circ J$ is continuously differentiable from the chain rule.

 For the local Lipschitz continuity let $v \in C^{2,\gamma}(\TT^d)$. We have
 \[
  \norm{(h'(u_1) - h'(u_2))v}{C^{0,\beta}} \leq 2 \norm{v}{C^{0,\beta}} \norm{h'(u_1) - h'(u_2)}{C^{0,\beta}} \leq 2 \norm{v}{C^{2,\gamma}} \norm{h'(u_1) - h'(u_2)}{C^{0,\beta}}
 \]
 so that
 \[
  \norm{dH[u_1] - dH[u_2]}{\cL(C^{2,\gamma}, C^{0,\beta})} \leq  \norm{h'(u_1) - h'(u_2)}{C^{0,\beta}}.
 \]
 Therefore $dH$ is locally Lipschitz continuous as soon as
 \[
  C^{2,\gamma}(\TT^d) \ni u \mapsto h'(u) \in C^{0,\beta}(\TT^d)
 \]
 is locally Lipschitzian, which is the case if $h' \in C^{1,1}_{\loc}$ according to \cite[Theorem 3.1]{N1993}.
\end{proof}

We are now going to prove the differentiability of the mapping $F$, defined by \eqref{eq:mfg_abstract}, in the situations described in \cref{ex:holder} and \cref{thm:mon_isom}.

\begin{lem}\label{lem:diff_G_holder}
 Let $f \in C_b^{1}(\RR) \cap C^{2}(\RR)$ and $0 < \gamma \leq \beta \leq \alpha$, where $\alpha$ is set in \eqref{h:m0}. Set $X = C^{2,\beta}(\TT^d) \times C^{2,\gamma}(\TT^d)$ and $Z = C^{0,\alpha} (\TT^d)\times C^{0,\beta}(\TT^d)$. Then the mapping $G \colon X \to Z$, defined by \eqref{eq:G}, is continuously differentiable and, for every $(u,m) \in X$, we have
 \[
  dG[u,m](v,\rho) = \left ( Du \cdot Dv - f'(m) \rho, - \diver (\rho Du) - \diver (m Dv) \right).
 \]
 Furthermore, if $f \in C^{2,1}_{\loc}(\RR)$, then $dG$ is locally Lipschitz continuous.
\end{lem}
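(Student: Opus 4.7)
The plan is to split $G = (G_1, G_2)$ into three elementary building blocks, two of which are polynomial (continuous bilinear or quadratic forms on Hölder spaces, hence automatically smooth) and one of which is the Nemytskii operator associated to $f$, handled by \cref{lem:f_diff_holder}. Concretely, I would write
\[
 G_1(v,\rho) = \tilde Q(v,v) - F(\rho), \qquad G_2(v,\rho) = -B(v,\rho) - \lambda m_0,
\]
with $\tilde Q(v_1,v_2) := \tfrac{1}{2} Dv_1 \cdot Dv_2$, $B(v,\rho) := \diver(\rho\, Dv) = D\rho \cdot Dv + \rho\, \Delta v$, and $F(\rho) := f(\rho)$. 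The constant $-\lambda m_0$ is irrelevant for differentiability.

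The first step is to check that $\tilde Q$ is a continuous bilinear map $C^{2,\beta}(\TT^d) \times C^{2,\beta}(\TT^d) \to C^{0,\alpha}(\TT^d)$ and that $B$ is a continuous bilinear map $X \times X \to C^{0,\beta}(\TT^d)$; from the classical fact that bounded multilinear maps between Banach spaces are Fréchet $C^\infty$, one then reads off $dQ[u]\,v = Du \cdot Dv$ and $dB[u,m](v,\rho) = \diver(m\,Dv) + \diver(\rho\,Du)$, both jointly linear in the base point $(u,m)$. The continuity of these bilinear forms reduces to the Banach algebra property of Hölder spaces: for $\tilde Q$ one uses $Dv_i \in C^{1,\beta}(\TT^d) \hookrightarrow C^{0,1}(\TT^d) \hookrightarrow C^{0,\alpha}(\TT^d)$; for $B$, each of the four factors in $D\rho \cdot Dv + \rho\, \Delta v$ lies in $C^{0,\beta}(\TT^d)$ thanks to the ordering $\gamma \leq \beta \leq \alpha \leq 1$ together with the embedding $C^{1,\gamma}(\TT^d) \hookrightarrow C^{0,1}(\TT^d)$.

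The Nemytskii piece $F$ is then handled by a direct application of \cref{lem:f_diff_holder}, with its exponent $\beta$ taken to be $\alpha$ here and its $\gamma$ kept as $\gamma$: since $f \in C^2(\RR)$, this gives that $F \colon C^{2,\gamma}(\TT^d) \to C^{0,\alpha}(\TT^d)$ is $C^1$ with derivative $\rho \mapsto f'(m)\rho$. Assembling the three pieces, precomposed with the obvious projections from $X$, yields the continuous differentiability of $G$ and the announced formula for $dG[u,m]$. For the local Lipschitz continuity of $dG$ under the additional hypothesis $f \in C^{2,1}_{\loc}(\RR)$, the polynomial contributions $dQ$ and $dB$ are linear in $(u,m)$, hence globally Lipschitz, while the last assertion of \cref{lem:f_diff_holder} supplies the local Lipschitzness of $dF$. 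No genuine obstacle is anticipated: the argument is essentially exponent bookkeeping, all substantive analytic content being encapsulated in \cref{lem:f_diff_holder}.
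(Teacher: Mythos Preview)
Your proposal is correct and follows essentially the same route as the paper: both arguments decompose $G$ into the quadratic gradient term, the Nemytskii term $f(\rho)$, and the bilinear divergence term, treat the polynomial pieces via the smoothness of bounded multilinear maps on H\"older spaces, and invoke \cref{lem:f_diff_holder} for the Nemytskii operator (including its final clause for the local Lipschitz assertion). Your exponent bookkeeping is slightly more explicit than the paper's, but the substance is identical.
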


\begin{proof}
 From \cref{lem:f_diff_holder}, we have that the differentials of
 \[
  C^{2,\beta}(\TT^d) \ni u \mapsto \frac{\module{Du}^2}{2} \in C^{0,\beta}(\TT^d)
 \]
 and
 \[
  C^{2,\gamma}(\TT^d) \ni m \mapsto f(m) \in C^{0,\beta}(\TT^d),
 \]
 are given by
 \[
   C^{2,\beta}(\TT^d) \ni v \mapsto Du \cdot Dv \in C^{0,\beta}(\TT^d)
 \]
 and
 \[
  C^{2,\gamma}(\TT^d) \ni \rho \mapsto f'(m)\rho \in C^{0,\beta}(\TT^d),
 \]
respectively. The second component of $dG$ being continuous and bilinear, its differentiability is also easy to check. The remaining conclusions follow from the assumptions on $f$ and last assertion in \cref{lem:f_diff_holder}.
\end{proof}

Then fact that $F$ is continuously differentiable when $f \in C_b^{1}(\RR) \cap C^{2}(\RR)$ is then a direct consequence of \cref{lem:diff_G_holder} and the chain rule.

\begin{prop}\label{prop:F_diff_holder}
 Let $f \in C_b^{1}(\RR) \cap C^{2}(\RR)$ and let $0 < \gamma \leq \beta \leq \alpha$, where $\alpha$ is fixed in \eqref{h:m0}. Set $X = C^{2,\beta}(\TT^d) \times C^{2,\gamma}(\TT^d)$ and $Z = C^{0,\alpha} (\TT^d)\times C^{0,\beta}(\TT^d)$. Then the mapping $F \colon X \to X$ defined by \eqref{eq:mfg_abstract} is continuously differentiable with $dF$ given by \eqref{eq:diff_F}. Furthermore, if $f \in C^{2,1}_{\loc}(\RR)$, then $dF$ is locally Lipschitz continuous in $\cL(X)$.
\end{prop}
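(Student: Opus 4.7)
The plan is to apply the chain rule to the decomposition $F = I + T \circ G$, where everything needed has essentially been assembled. First I would verify that $T \in \cL(Z,X)$: by the Schauder estimates \cite[Corollary 6.3]{GT2001}, the operator $T$ maps $Z = C^{0,\alpha}(\TT^d) \times C^{0,\beta}(\TT^d)$ boundedly into $C^{2,\alpha}(\TT^d) \times C^{2,\beta}(\TT^d)$, and since $\gamma \leq \beta \leq \alpha$, there is a continuous embedding $C^{2,\alpha}(\TT^d) \times C^{2,\beta}(\TT^d) \hookrightarrow X$, so $T \in \cL(Z,X)$. By \cref{lem:diff_G_holder}, $G \colon X \to Z$ is continuously Fréchet differentiable with the differential stated there.

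Next, the identity $I \colon X \to X$ is trivially differentiable with $dI[u,m] = I$, and the composition of a bounded linear operator with a continuously differentiable map is continuously differentiable. Hence the chain rule yields that $T \circ G \colon X \to X$ is $C^1$ with $d(T \circ G)[u,m] = T \circ dG[u,m]$, and therefore
\[
 dF[u,m] = I + T \circ dG[u,m],
\]
which is precisely \eqref{eq:diff_F}.

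For the local Lipschitz continuity of $dF$ when $f \in C^{2,1}_{\loc}(\RR)$, I would use again \cref{lem:diff_G_holder}, which gives the local Lipschitz continuity of $dG \colon X \to \cL(X,Z)$. Since $T$ is a fixed bounded linear operator, we have
\[
 \norm{dF[u_1,m_1] - dF[u_2,m_2]}{\cL(X)} \leq \norm{T}{\cL(Z,X)} \, \norm{dG[u_1,m_1] - dG[u_2,m_2]}{\cL(X,Z)},
\]
and the right-hand side is locally Lipschitz in $(u,m) \in X$. There is no real obstacle here: all the analytic work was done in establishing the differentiability of the Nemytskii operator in \cref{lem:f_diff_holder} and in assembling \cref{lem:diff_G_holder}; the present statement is just a bookkeeping application of the chain rule together with the Schauder regularity of $T$.
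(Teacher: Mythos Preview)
Your proposal is correct and follows essentially the same approach as the paper, which states that the result is a direct consequence of \cref{lem:diff_G_holder} and the chain rule. You have simply spelled out the details (checking $T\in\cL(Z,X)$ via Schauder estimates and writing the Lipschitz estimate explicitly), but the argument is the same.
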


\subsection{Differentiability in Sobolev spaces}

We begin with a preliminary result on the differentiability of the Nemytskii operator on Lebesgue spaces.

\begin{prop}\label{prop:diff_nemytskii_lebesgue}
 Let $(\Omega, \mathcal{A}, \mu)$ be a finite measure space and let $1 < p,q,r < \infty$ with $q < p$ and $1/r = 1/q - 1/p$. Consider a function $h \in C^1(\RR)$ such that
 \begin{equation}\label{eq:nemytskii_growth}
  \module{h'(x)} \leq C \left ( 1 + \module{x}^{p/r} \right)
 \end{equation}
 and define the Nemytskii operator $H \colon L^p(\Omega, \mu) \to L^q(\Omega, \mu)$ by
 \[
  H[u](x) = h(u(x)) \quad \textnormal{for } x \in \Omega \quad \textnormal{for } \mu\textnormal{-a.e.} x \in \Omega.
 \]
 Then $H$ is continuously differentiable with $dH[u](v) = h'(u)v$.
\end{prop}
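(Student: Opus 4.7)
The plan is to follow the classical three-step strategy for superposition operators on Lebesgue spaces, using throughout the algebraic identity $1 + p/r = p/q$ (equivalent to $1/q = 1/p + 1/r$) that will make Hölder's inequality line up.

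First I would check that $H$ actually maps $L^p(\Omega,\mu)$ into $L^q(\Omega,\mu)$ and that the candidate derivative $A_u\colon v \mapsto h'(u)v$ is a bounded linear operator from $L^p$ into $L^q$. Integrating the growth bound on $h'$ yields $\module{h(x)} \leq C(1 + \module{x}^{p/q})$, so $\module{H[u]}^q \leq C(1 + \module{u}^p)$, which is integrable since $\mu$ is finite. For $A_u$, Hölder's inequality with exponents $(r,p)$ gives
\[
\norm{h'(u)v}{L^q} \leq \norm{h'(u)}{L^r}\norm{v}{L^p},
\]
and the growth condition yields $\norm{h'(u)}{L^r}^r \leq C(\mu(\Omega) + \norm{u}{L^p}^p)$, so $A_u \in \cL(L^p, L^q)$.

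Next, to prove Fréchet differentiability at $u$, I would write
\[
H[u+v](x) - H[u](x) - h'(u(x))v(x) = v(x) \int_0^1 \bigl(h'(u(x)+tv(x)) - h'(u(x))\bigr)\,dt,
\]
then estimate the $L^q$ norm of the right-hand side by Hölder's inequality to obtain
\[
\norm{H[u+v] - H[u] - h'(u)v}{L^q} \leq \norm{v}{L^p} \int_0^1 \norm{h'(u+tv) - h'(u)}{L^r}\,dt.
\]
This reduces the problem to showing that the integrand tends to $0$ in $L^r$ as $\norm{v}{L^p} \to 0$, which is exactly the continuity at $u$ of the Nemytskii operator associated to $h'$, viewed as a mapping $L^p \to L^r$. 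This mapping is indeed well-defined and continuous by the classical theorem of Krasnoselskii, since $h'$ is continuous and satisfies $\module{h'(x)} \leq C(1 + \module{x}^{p/r})$; I would quote this rather than reprove it. Dominated convergence (using $\module{h'(u+tv)} \leq C(1+\module{u}^{p/r}+\module{v}^{p/r})$ as the $L^r$-dominating function, uniformly in $t \in [0,1]$) then lets us pass the limit inside the $dt$-integral.

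Finally, continuity of $dH\colon L^p \to \cL(L^p,L^q)$ follows from the same Hölder estimate: for $u_n \to u$ in $L^p$,
\[
\norm{dH[u_n] - dH[u]}{\cL(L^p,L^q)} \leq \norm{h'(u_n) - h'(u)}{L^r},
\]
and the right-hand side vanishes by the continuity of the Nemytskii operator associated to $h'$. The only real obstacle is this last continuity property; everything else is direct algebra and Hölder. Since it is a textbook result under the prescribed growth condition, I would simply cite a standard reference on Nemytskii operators (e.g.\ Appell--Zabrejko, or \cite{N1993} already cited above for the Hölder case).
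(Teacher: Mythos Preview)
Your proposal is correct and follows essentially the same route as the paper: both use the integral remainder formula, H\"older with exponents $(p,r)$, and reduce Fr\'echet differentiability to the continuity of the Nemytskii operator $u\mapsto h'(u)$ from $L^p$ to $L^r$. The only difference is that you invoke Krasnoselskii's theorem for this continuity and then pass the limit inside the $dt$-integral by dominated convergence, whereas the paper proves the (slightly stronger) uniform-in-$t$ convergence $\sup_{t\in[0,1]}\norm{h'(u+tv_n)-h'(u)}{L^r}\to 0$ directly via the Lebesgue--Vitali uniform integrability criterion.
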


\begin{proof}
  The result is well-known and directly follows from \eqref{eq:nemytskii_growth} and \cite[Theorem 3.12]{AZ1990}.
\end{proof}

We now come back to the differentiability of the mapping $F$ for the situation described in \cref{ex:sobolev}.
\begin{prop}\label{prop:F_diff_sobolev}
 Let $f \in C_b^{1}(\RR) \cap C^{1,1}_{\loc}(\RR)$, $X = W^{1,p}(\TT^d) \times L^q(\TT^d)$, and $Z = L^{p/2}(\TT^d) \times W^{-1,r'}(\TT^d)$, where $d \leq p,q < \infty$, $q > p/2$, and $1/r \geq 1/p + 1/q$. Assume also that $W^{1,r}(\TT^d) \hookrightarrow L^{q}(\TT^d)$. Then the mapping $F \colon X \to X$ defined by \eqref{eq:mfg_abstract} is continuously differentiable.
 \end{prop}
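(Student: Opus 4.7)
The plan is to observe that $T \in \cL(Z,X)$ was already established in \cref{ex:sobolev}, so by the chain rule it suffices to prove that $G \colon X \to Z$ defined by \eqref{eq:G} is continuously Fréchet differentiable; then $F = I + T \circ G$ inherits the same regularity and $dF$ is given by \eqref{eq:diff_F}. I would split $G = (G_1, G_2)$ into its two scalar components and treat each one separately.

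For the first component $G_1(v,\rho) = \frac{1}{2}\module{Dv}^2 - f(\rho)$, viewed from $X$ into $L^{p/2}(\TT^d)$, the two terms can be handled independently. The map $v \mapsto \frac{1}{2}\module{Dv}^2$ factors as the bounded linear operator $v \mapsto Dv$ from $W^{1,p}(\TT^d)$ into $L^p(\TT^d;\RR^d)$ followed by the continuous quadratic form $w \mapsto \frac{1}{2}\module{w}^2$ from $L^p(\TT^d;\RR^d)$ into $L^{p/2}(\TT^d)$ (Hölder), hence it is smooth and its differential at $u$ in direction $v$ equals $Du \cdot Dv$. For the nonlinear term $\rho \mapsto f(\rho)$, I would invoke \cref{prop:diff_nemytskii_lebesgue} with input exponent $q$ and output exponent $p/2$, which is admissible since $q > p/2$ by assumption; the growth condition \eqref{eq:nemytskii_growth} on $f'$ holds trivially because $f \in C^1_b(\RR)$ makes $f'$ bounded. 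This yields that $\rho \mapsto f(\rho)$ is $C^1$ from $L^q(\TT^d)$ into $L^{p/2}(\TT^d)$ with differential $\rho \mapsto f'(m)\rho$.

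For the second component $G_2(v,\rho) = -\diver(\rho Dv) - \lambda m_0$, the constant term disappears upon differentiation and it remains to show that the bilinear map $B\colon (v,\rho) \mapsto -\diver(\rho Dv)$ is continuous from $X$ into $W^{-1,r}(\TT^d)$. Hölder's inequality (together with the finiteness of $\module{\TT^d}$ to absorb a possible gap in the exponent relation) gives $\norm{\rho Dv}{L^r} \leq C \norm{\rho}{L^q} \norm{Dv}{L^p}$ thanks to $1/r \geq 1/p + 1/q$, and the divergence operator is bounded from $L^r(\TT^d;\RR^d)$ into $W^{-1,r}(\TT^d)$ by duality (\cref{rem:dual_sobolev}). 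A continuous bilinear map between Banach spaces is smooth, with differential at $(u,m)$ in direction $(v,\rho)$ equal to $-\diver(\rho Du) - \diver(m Dv)$.

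Combining the two components yields the continuous differentiability of $G \colon X \to Z$ and the formula for $dG$ matching \eqref{eq:dG}; the chain rule then concludes the proof for $F$. The only delicate point is checking that the exponent hypotheses of \cref{prop:diff_nemytskii_lebesgue} are compatible with the assumptions on $p$ and $q$ here; once this bookkeeping is settled, the remainder is just the standard regularity of multilinear forms and bounded linear operators.
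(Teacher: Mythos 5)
Your proposal is correct and follows essentially the same route as the paper: reduce to the continuous differentiability of $G$, apply \cref{prop:diff_nemytskii_lebesgue} to $\rho \mapsto f(\rho)$ (from $L^q$ to $L^{p/2}$, using $q > p/2$ and the boundedness of $f'$), treat $\diver(\rho Dv)$ as a continuous bilinear map into $W^{-1,r}$ via H\"older and $1/r \geq 1/p + 1/q$, and conclude by the chain rule with $T \in \cL(Z,X)$ from \cref{ex:sobolev}. The only (harmless) deviation is that you handle $v \mapsto \tfrac12\module{Dv}^2$ as a bounded linear map composed with a continuous quadratic form rather than via the Nemytskii lemma, which if anything is cleaner since it sidesteps the vector-valued extension of that lemma.
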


 \begin{proof}
  Under these assumptions, we recall from \cref{ex:sobolev} that \ref{H:banach} holds. From \cref{prop:diff_nemytskii_lebesgue} we have that the mappings
  \[
    L^q(\TT^d) \ni m \mapsto f(m) \in L^{p/2}(\TT^d)
  \]
  and
  \[
    W^{1,p}(\TT^d) \ni u \mapsto \module{Du}^2 \in L^{p/2}(\TT^d)
  \]
  are continuously differentiable. Moreover
  \[
    W^{1,p}(\TT^d) \times L^q(\TT^d) \ni (u,m) \mapsto \diver(mDu) \in W^{-1,r}(\TT^d)
  \]
  is also continuously differentiable as a continuous bilinear operator. The differentiability of $F$ then follows from the chain rule.
 \end{proof}

\section{Applications}
\label{section:applications}
In this section we provide three applications of the isomorphism property of stable solutions to \eqref{eq:mfg}.

\subsection{Stability under perturbations of the MFG system}

In this section, our goal is to study perturbations of the mean field game system \eqref{eq:mfg}, or equivalently \eqref{eq:mfg_abstract}. In the case of stable solutions, the isomorphism property obtained in \cref{thm:isom,thm:mon_isom} motivates the use of the implicit function theorem, allowing us to consider a large class of perturbations. In what follows we provide a simple example of this idea. Namely, we consider the system
\begin{equation}\label{eq:perturbed_mfg}
 \begin{cases}
  - \Delta u + \frac{1}{2} \module{Du}^2 + \lambda u = f(m) + \epsilon \hat f(m) \quad & \textnormal{in } \TT^d, \\
  - \Delta m - \diver(mDu) + \lambda m = \lambda ((1 - \epsilon)m_0 + \epsilon m_1) \quad & \textnormal{in } \TT^d,
 \end{cases}
\end{equation}
where $\epsilon >0$ is a small parameter and $\hat f \in C^2((0,+ \infty))$ and $m_1 \in C^{0,\alpha}(\TT^d) \cap \cP(\TT^d)$ are perturbations of $f \in C_b^{1}(\RR) \cap C^{2}(\RR)$ and $m_0$, respectively. We recall that $\alpha$ is fixed in \eqref{h:m0}.

Let $(u,m) \in C^{2,\alpha}(\TT^d) \times C^{2,\alpha}(\TT^d)$ be a stable solution to \eqref{eq:mfg} and set $0 < \gamma < \beta < \alpha$. Since $m_0 \neq 0$ we also have $m > 0$ from  \cref{lem:fokker_planck} \ref{item:fp_positive}. In particular, there exists a  bounded neighborhood $\cO$ of $m$ in $C^{2,\gamma}(\TT^d)$ and $\eta > 0$ such that, for every $\tilde m \in \cO$, we have $ \tilde m \geq  \eta$. Notice that if we define
\[
 E_{\mathcal O} : = \left \{ \tilde m (x) : \, \tilde m \in \mathcal O, \, x \in \TT^d \right \},
\]
then we have that $E_{\mathcal{O}}$ is bounded in $\RR$ and $\inf E_{\mathcal O} \geq \eta > 0$.
Since $\hat f \in C^2((0,+\infty))$ is Lipschitz continuous on $E_{\mathcal O}$, we deduce from \cref{lem:f_diff_holder} that the mapping $\cO \ni \tilde m \mapsto \hat f(\tilde m) \in C^{0,\beta}(\TT^d)$ is well-defined and continuously differentiable.

Let $X = C^{2,\beta}(\TT^d) \times C^{2,\gamma}(\TT^d)$, $ Z = C^{0,\beta}(\TT^d) \times C^{0,\gamma}(\TT^d)$ and consider the mappings $G$ and $T$, defined by \eqref{eq:G} and \eqref{eq:T}, respectively.
We recall that it was checked in \cref{ex:holder} that \ref{H:banach} holds in this case. We introduce the mapping
\[
 \hat{G} \colon C^{2,\beta}(\TT^d) \times \cO \times \RR_+ \to C^{0,\beta}(\TT^d) \times C^{0,\gamma}(\TT^d)
\]
defined by
\[
 \hat{G}(u,m,\epsilon) = G(u,m) - \epsilon (\hat f(m), \lambda (m_1 - m_0) ).
\]
Then, setting
\[
 \hat{F}(u,m,\epsilon) = \left ( \hat I + T \circ \hat{G} \right)(u,m,\epsilon),
\]
where $T$ is defined by \eqref{eq:T} and $\hat I (u,m,\epsilon) = (u,m)$, it holds that $(u_\epsilon,m_\epsilon) \in X$ solves \eqref{eq:perturbed_mfg} if and only if
\[
 \hat{F}(u_\epsilon, m_\epsilon, \epsilon) = 0.
\]
Arguing as in the proof of \cref{prop:F_diff_holder}, we have that $\hat{F}$ is continuously differentiable on $C^{2,\beta}(\TT^d) \times \cO \times [0, +\infty)$. Moreover, it follows from \cref{thm:isom,ex:holder} that $d_{(u,m)}\hat{F}[u,m,0] = dF[u,m]$ is an isomorphism on $X$. We can apply the implicit function theorem to obtain the following result.

\begin{prop}[Sensitivity analysis]\label{prop:perturbations}
 Let $f \in C_b^{1}(\RR) \cap C^{2}(\RR)$, $\hat f \in C^{2}((0,+\infty))$, $m_1 \in C^{0,\alpha}(\TT^d) \cap \cP(\TT^d)$, $0 < \gamma < \beta < \alpha$, where $\alpha$ is fixed in \eqref{h:m0}, and $(u,m) \in  C^{2,\alpha}(\TT^d) \times C^{2,\alpha}(\TT^d)$ be a stable solution to \eqref{eq:mfg}. Then, for some $\epsilon_0 > 0$ and every $\epsilon \in [0, \epsilon_0)$, there exists $(u_\epsilon, m_\epsilon) \in C^{2,\beta}(\TT^d) \times C^{2,\gamma}(\TT^d)$  such that
 \[
  \hat{F}(u_\epsilon,m_\epsilon, \epsilon) = 0,
 \]
 with
 \[
 (u_\epsilon,m_\epsilon) = (u,m) - \epsilon dF(u,m)^{-1} T (\hat f(m), \lambda (m_1 - m_0) ) + o(\epsilon),
 \]
 and
 \[
  \norm{u_\epsilon - u}{C^{2,\beta}} + \norm{m_\epsilon - m}{C^{2,\gamma}}  = O(\epsilon).
 \]
 Furthermore, $(u_\epsilon,m_\epsilon)$ is a stable solution to \eqref{eq:perturbed_mfg}.
\end{prop}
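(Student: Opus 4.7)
The plan is to apply the implicit function theorem to the mapping $\hat F$ at the point $(u,m,0)$, which has been set up precisely for this purpose in the paragraphs preceding the statement.

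\textbf{Step 1: Verify the hypotheses of the implicit function theorem.} The preamble already shows that $\hat F$ is well-defined on the open set $C^{2,\beta}(\TT^d) \times \cO \times [0,+\infty)$; continuous differentiability there follows by the same reasoning as in \cref{prop:F_diff_holder} combined with the observation that $\cO \ni \tilde m \mapsto \hat f(\tilde m) \in C^{0,\beta}(\TT^d)$ is continuously differentiable (by \cref{lem:f_diff_holder} applied on the bounded set $E_{\cO} \subset (0,+\infty)$, using that $\hat f \in C^2$ there and extending smoothly off $E_{\cO}$ if needed). Moreover, by construction $\hat F(u,m,0)=F(u,m)=0$, and since $\partial_\varepsilon \hat G$ does not involve derivatives in $(u,m)$, the partial differential in the first two variables satisfies $d_{(u,m)} \hat F[u,m,0] = dF[u,m]$. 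By \cref{ex:holder} the hypotheses of \cref{thm:isom} hold for the chosen $X$ and $Z$, so $dF[u,m]$ is an isomorphism on $X$.

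\textbf{Step 2: Apply the implicit function theorem and extract the expansion.} The theorem produces $\varepsilon_0 > 0$ and a continuously differentiable curve $\varepsilon \mapsto (u_\varepsilon,m_\varepsilon) \in X$ defined on $[0,\varepsilon_0)$ with $(u_0,m_0)=(u,m)$ and $\hat F(u_\varepsilon,m_\varepsilon,\varepsilon)=0$. Equivalently, $(u_\varepsilon,m_\varepsilon)$ is a classical solution to \eqref{eq:perturbed_mfg} (the equivalence between the abstract and the classical formulations argued for \eqref{eq:mfg_abstract} transfers verbatim). Differentiating $\hat F(u_\varepsilon,m_\varepsilon,\varepsilon)=0$ in $\varepsilon$ at $0$ gives
\[
\frac{d}{d\varepsilon}(u_\varepsilon,m_\varepsilon)\Big|_{\varepsilon=0} = -dF[u,m]^{-1}\, \partial_\varepsilon \hat F[u,m,0] = -dF[u,m]^{-1}\, T\bigl(\hat f(m),\lambda(m_1-m_0)\bigr),
\]
which yields both the first-order expansion and the $O(\varepsilon)$ bound in the $X$-norm by continuous differentiability of $\varepsilon \mapsto (u_\varepsilon,m_\varepsilon)$.

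\textbf{Step 3: Stability of the perturbed solution.} Let $F_\varepsilon \colon X \to X$ denote the mapping associated with the perturbed system, so that $F_\varepsilon(\tilde u,\tilde m) = (\tilde u,\tilde m) + T \circ \hat G(\tilde u,\tilde m,\varepsilon)$. The differential $dF_\varepsilon[u_\varepsilon,m_\varepsilon]$ depends continuously on $\varepsilon$ in $\cL(X)$ (jointly in $(u_\varepsilon,m_\varepsilon,\varepsilon)$), and at $\varepsilon=0$ it equals $dF[u,m]$, which is an isomorphism on $X$ by \cref{thm:isom}. Since the invertible elements of $\cL(X)$ form an open set, shrinking $\varepsilon_0$ if necessary we obtain that $dF_\varepsilon[u_\varepsilon,m_\varepsilon]$ is an isomorphism on $X$ for every $\varepsilon \in [0,\varepsilon_0)$. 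In particular, the kernel of $dF_\varepsilon[u_\varepsilon,m_\varepsilon]$ in $X$ is trivial, so any $X$-solution of the linearized perturbed MFG system vanishes; combined with the regularity statement \cref{lem:mfg_lin_reg} (applied to the perturbed linearization, whose coupling $f' + \varepsilon \hat f'$ remains $C^{1,1}_{\loc}$ on $E_{\cO}$), this shows that any classical solution of the linearization of \eqref{eq:perturbed_mfg} at $(u_\varepsilon,m_\varepsilon)$ is zero, \emph{i.e.} $(u_\varepsilon,m_\varepsilon)$ is stable.

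I do not anticipate a real obstacle: all analytic work has been done in \cref{section:MFG,section:Reformulation,section:differentiability}. The one delicate point is ensuring that $\hat f$, which is only defined on $(0,+\infty)$, generates a smooth Nemytskii operator on the relevant neighborhood; this is handled by the positivity of $m$ guaranteed by \cref{lem:fokker_planck}\ref{item:fp_positive} together with a continuity argument to stay inside $\cO$ for small $\varepsilon$.
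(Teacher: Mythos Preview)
Your proof is correct and follows exactly the approach the paper indicates: the preamble sets up $\hat F$ precisely so that the implicit function theorem applies at $(u,m,0)$, and you carry this out. Your Step~3 on stability of $(u_\varepsilon,m_\varepsilon)$ supplies an argument that the paper leaves implicit, but the idea (continuity of $\varepsilon \mapsto d_{(u,m)}\hat F[u_\varepsilon,m_\varepsilon,\varepsilon]$ combined with openness of invertibles in $\cL(X)$) is the natural one and matches the spirit of the paper's framework.
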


\begin{rem}
 We may choose $\alpha = \beta = \gamma$ in \cref{prop:perturbations} under the additional assumption that $f' \geq 0$.
\end{rem}

\subsection{Finite Element approximation of the MFG system}

Our goal here is to obtain error estimates for the finite element approximation of a stable classical solution to \eqref{eq:mfg} by applying the following result of Brezzi-Rappaz-Raviart \cite{BRR1980} (see also \cite[Section IV.3]{GR1986}).
\begin{thm}[{\cite[Theorem 3.3 and Remark 3.5]{GR1986}}]\label{thm:Brezzi-al}
 Let $V,W$ be Banach spaces, let $T, T_h \in \cL(W,V)$, for every $h > 0$, and let $G \colon V \to W$ be a continuously differentiable mapping such that $dG$ locally Lipschitz continuous in $\cL(V,W)$. Set $F = I + T \circ G$ and let $x \in V$ be such that $F(x) = 0$ and $dF[x]$ is an isomorphism on $V$. If
 \begin{equation}\label{eq:BRR1980_assumption}
  \lim_{h \to 0} \norm{T - T_h}{\cL(W,V)} = 0,
 \end{equation}
 then there exists $h_0 > 0$ and a neighborhood $\mathcal{O}$ of $x$ in $V$ such that, for every $0 < h \leq h_0$, there exists $x_h \in V$ such that
 \begin{equation}\label{eq:FEM_equation_abstract}
  F_h(x_h) := (I + T_h \circ G)(x_h) = 0.
 \end{equation}
 Furthermore the following properties hold
 \begin{enumerate}[label=(\roman*)]
  \item $dF_h[x_h]$ is an isomorphism on $V$,
  \item $x_h \in \mathcal{O}$ for every $0 < h \leq h_0$ and there is no other solution to \eqref{eq:FEM_equation_abstract} in $\mathcal{O}$,
  \item there exists a constant $K > 0$, independent of $h$, such that
  \begin{equation}
   \norm{x - x_h}{V} \leq K \norm{(T - T_h)G(x)}{V}.
  \end{equation}
 \end{enumerate}
\end{thm}

We fix $d \leq 3$ and consider the following Banach spaces
\begin{equation}\label{eq:banach_spaces}
 X = W^{1,p}(\TT^d) \times L^q(\TT^d)  \quad \textnormal{and} \quad Z = L^{p/2}(\TT^d) \times H^{-1}(\TT^d),
\end{equation}
where $p,q \in (3,6)$ with $\frac{1}{p} + \frac{1}{q} = \frac{1}{2}$. Notice that under these assumptions we have $q > p/2$. Moreover the Rellich-Kondrachov theorem gives the compact embedding $H^1(\TT^d) \hookrightarrow L^{q}(\TT^d)$ since $q < 6$. Therefore all the assumptions in \cref{ex:sobolev} are satisfied (with $r = 2$). In particular, the linear operator $T \in \cL(Z,X)$, defined by \eqref{eq:T}, and the mapping $G \colon X \to Z$, given by \eqref{eq:G}, satisfy \ref{H:banach}.  We set $r := \frac{dp}{p + d} < \min \{p,d\}$ and we notice that Sobolev's inequality implies that $W^{2,r}(\TT^d) \hookrightarrow W^{1,p}(\TT^d)$.

For every $h >0$, let $\mathcal{T}_h$ be a quasi-uniform family of periodic triangulations of $[0,1]^d$ (see \cite[Definition 4.4.13]{BS2008}). Let also $V_h \subset W^{1,\infty}(\TT^d)$ be the associated finite element space induced by $\PP^1$ Lagrange finite elements. We define $S_h \in \cL(H^{-1}(\TT^d),H^1(\TT^d))$ by $S_h \xi = v_h$ where $v_h$ is the unique element in $V_h$ such that
\[
 \int_{\TT^d} Dv_h \cdot D\phi_h + \lambda v_h \phi_h \ dx = \langle \xi,\phi_h \rangle_{H^{-1},H^1} \quad \textnormal{for every } \phi_h \in V_h.
\]
In addition, we denote by $S \in \cL(H^{-1}(\TT^d), H^1(\TT^d))$ the linear operator defined by $S \xi = v$, where $v \in H^1(\TT^d)$ is the unique weak solution to
\begin{equation}\label{eq:FEM_continuous}
 -\Delta v + \lambda v = \xi \quad \textnormal{in } \TT^d.
\end{equation}
These linear operators are known to be well defined through the Lax-Milgram theorem and we have
\begin{equation}\label{eq:bound_lax_milgram}
 \norm{S}{\cL(H^{-1}, H^1)},\, \norm{S_h}{\cL(H^{-1},H^1)} \leq \frac{1}{\min \{1,\lambda \}}.
\end{equation}
We also have from \cite[Theorem 3.16, Theorem 3.18]{EG2004} that
\begin{equation}\label{eq:FEM_error_L2}
 \norm{(S - S_h) \xi}{L^2} \leq Ch \norm{(S - S_h) \xi}{H^1} \leq Ch \norm{S \xi}{H^1} \leq Ch\norm{\xi}{H^{-1}},
\end{equation}
where the constant $C$ is independent of $h$.

Let $\xi \in L^r(\TT^d)$. According to \cite[Theorem 3.21]{EG2004} and \cite[Theorem 8.5.3]{BS2008}, there exists $h_0 >0$ and a positive constant $C$ such that, for every $h \leq h_0$, it holds that
\begin{equation}\label{eq:FEM_W1p_stable}
 \norm{S_h \xi}{W^{1,p}} \leq C \norm{S \xi}{W^{1,p}}.
\end{equation}
Using the continuous embedding $W^{2,r}(\TT^d) \hookrightarrow W^{1,p}(\TT^d)$ and $W^{2,p}$ estimates \cite[Theorem 9.11]{GT2001}, we have
\begin{equation}\label{eq:FEM_W2p}
 \norm{S \xi}{W^{1,p}} \leq C \norm{S \xi}{W^{2,r}} \leq C \norm{\xi}{L^r}.
\end{equation}
Combining \eqref{eq:FEM_W1p_stable} and \eqref{eq:FEM_W2p}, for every $h \leq h_0$, we have $S_h \in \cL(L^r(\TT^d),W^{1,p}(\TT^d))$, where $\lVert S_h \rVert_ {\cL(L^r,W^{1,p})}$ is bounded from above by a constant which is independent of $h$. Moreover, up to the choice of a smaller $h_0$, from \cite[Theorem 3.21, Corollary 3.23]{EG2004} and \cite[Theorem 8.5.3]{BS2008} we also have, for every $h \leq h_0$, that $S_h \in \cL(L^p(\TT^d),W^{1,p}(\TT^d))$ with
\begin{equation}\label{FEM:error_W1p}
 \norm{(S - S_h) \xi}{W^{1,p}} \leq Ch \norm{S \xi}{W^{2,p}} \leq Ch \norm{\xi}{L^p},
\end{equation}
where the constant $C$ is independent of $h$. \\

We can now define the linear operator
\[
 T_h \in \cL(L^r(\TT^d) \times H^{-1}(\TT^d), W^{1,p}(\TT^d) \times H^{1}(\TT^d)),
\]
 with range in $X_h := V_h \times V_h$, by setting
\[
 T_h(\xi,\zeta) = (S_h \xi, S_h \zeta) \quad \textnormal{for every } (\xi,\zeta) \in L^r(\TT^d) \times H^{-1}(\TT^d).
\]
Notice that from \eqref{eq:T}, \eqref{eq:FEM_error_L2} and \eqref{eq:FEM_W1p_stable}, we also have
\[
 T_h \in \cL(L^p(\TT^d) \times H^{-1}(\TT^d), W^{1,p}(\TT^d) \times L^2(\TT^d))
\]
with
\begin{equation}\label{eq:FEM_error_Th}
 \norm{(T - T_h)(\xi,\zeta)}{W^{1,p} \times L^2} \leq Ch \norm{(\xi,\zeta)}{L^p \times H^{-1}}.
\end{equation}
In particular, using Sobolev inequalities (\cite[Theorem 9.9, Corollary 9.11, Theorem 9.12]{B2011}, we have obtained that
\[
 (T - T_h) \in \cL(L^r(\TT^d) \times H^{-1}(\TT^d), W^{1,p}(\TT^d) \times L^{s}(\TT^d))) \cap \cL(L^p(\TT^d) \times H^{-1}(\TT^d), W^{1,p}(\TT^d) \times L^2(\TT^d)),
\]
where $s$ is the critical Sobolev exponent for the continuous embedding $H^1(\TT^d) \hookrightarrow L^{s}(\TT^d)$, namely
\[
 \begin{cases}
  s = 6 \quad & \textnormal{if } d = 3, \\
  s \in [6, \infty) \quad & \textnormal{if } d = 2, \\
  s = \infty \quad & \textnormal{if } d =1.
 \end{cases}
\]

We set $\theta = \frac{p - d}{p}$ and $\theta^\star = \frac{(p-2)s - 2p}{(s - 2)p}$  so that
\[
 \frac{2}{p} = \frac{1 - \theta}{r} + \frac{\theta}{p}
\]
and
\[
 \frac{1}{q} = \frac{p-2}{2p} = \frac{1-\theta^\star}{s} + \frac{\theta^\star}{2}.
\]
 Using complex interpolation (see \cite[Chapter 2]{L2018}) we have that
\[
 L^{p/2}(\TT^d) = \left [L^r(\TT^d), L^p(\TT^d) \right]_{\theta} \quad \textnormal{and} \quad L^q(\TT^d) = \left [ L^{s}(\TT^d), L^2(\TT^d) \right]_{\theta^\star}.
\]
It follows from \cite[Theorem 2.6]{L2018} together with \eqref{eq:FEM_W1p_stable}, \eqref{eq:FEM_W2p}, \eqref{FEM:error_W1p}, \eqref{eq:bound_lax_milgram} and \eqref{eq:FEM_error_L2} that that
\[
 \norm{S-S_h}{\cL(L^{p/2},W^{1,p})} \leq \norm{S-S_h}{\cL(L^{r},W^{1,p})}^{1 - \theta} \norm{S - S_h}{\cL(L^p,W^{1,p})}^{\theta} \leq C h^{\theta}
\]
and
\[
 \norm{S-S_h}{\cL(H^{-1},L^q)} \leq \norm{S-S_h}{\cL(H^{-1},L^{s})}^{1 - \theta^\star} \norm{S - S_h}{\cL(H^{-1},L^2)}^{\theta^\star} \leq C h^{\theta^\star}.
\]
Noticing that $T - T_h = (S - S_h, S - S_h)$, we deduce that
\begin{align*}
 \norm{T - T_h}{\cL(L^{p/2} \times H^{-1}, W^{1,p} \times L^q)} & \leq \norm{S-S_h}{\cL(L^{p/2},W^{1,p})} + \norm{S-S_h}{\cL(H^{-1},L^q)} \\ &\leq C \left (h^\theta + h^{\theta^\star} \right).
\end{align*}

Since
\[
 \begin{cases}
  \theta^\star = (p-3)/p \quad & \textnormal{if } d =3, \\
  \theta^\star \in [(p-3)/p, (p-2)/p) \quad & \textnormal{if } d=2, \\
  \theta^{\star} = (p-2)/p \quad & \textnormal{if } d =1,
 \end{cases}
\]
we have $\sigma := \min \{ \theta, \theta^\star \} = \theta^\star$. Therefore we obtain that
\[
 T_h, (T-T_h) \in \cL(Z,X)
\]
with
\begin{align}
 \norm{T-T_h}{\cL(Z,X)} \leq C h^{\sigma} \label{eq:FEM_error_estimate},
\end{align}
for every $h \leq h_0 \leq 1$.

We can now apply \cref{thm:Brezzi-al} to obtain the following result.
\begin{thm}[Local convergence of finite element approximations]\label{thm:fem_approx}
 Let $f \in W^{2,\infty}(\RR)$ and let $(u,m)$ be a stable solution to \eqref{eq:mfg}. Let $X,Z$ be defined according to \eqref{eq:banach_spaces} with $3 <p,q<6$ and $1/2 = 1/p + 1/q$ and let $T_h$be defined as above. There exists $h_0 \in (0,1]$ and a neighborhood $\mathcal{O}$ of the origin in $X$ such that, for every $0 < h \leq h_0$,  there exists a solution $(u_h,m_h) \in X_h$ to
 \[
  F_h(u_h,m_h) := (u_h,m_h) + T_h(G(u_h,m_h)) = 0
 \]
 satisfying
 \begin{enumerate}[label=(\roman*)]
  \item $(u,m) - (u_h, m_h) \in \mathcal{O}$,

  \item $(u_h,m_h)$ is the unique solution to $F_h(u_h,m_h) = 0$ in $(u,m) + \mathcal{O}$,

  \item There exists a positive constant $C > 0$ such that
  \[
   \norm{(u-u_h,m-m_h)}{X} \leq Ch^\sigma,
  \]
  where
  \[
    \begin{cases}
      \sigma = (p-3)/p \quad & \textnormal{if } d =3, \\
      \sigma \in [(p-3)/p, (p-2)/p) \quad & \textnormal{if } d=2, \\
      \sigma = (p-2)/p \quad & \textnormal{if } d =1,
 \end{cases}
  \]
  \item \label{item:fem_approx_isom} $dF_h[u_h,m_h] \in \cL(X)$ is an isomorphism.

 \end{enumerate}
\end{thm}

\begin{proof}
 Using \cref{prop:F_diff_sobolev} we have that $F$ is continuously differentiable and from \cref{ex:sobolev} we know that $dF[u,m]$ is an isomorphism on $X$. Note that the assumption $f \in W^{2,\infty}(\RR)$ ensures that $dG$ is locally Lipschitz continuous in $\cL(X,Z)$.  Moreover, from \eqref{eq:FEM_error_estimate} we deduce that \eqref{eq:BRR1980_assumption} is satisfied. We can therefore apply \cref{thm:Brezzi-al} to obtain the conclusion.
\end{proof}

As a direct consequence of \cref{thm:fem_approx} \ref{item:fem_approx_isom} we deduce the local convergence of Newton's method for the discretized problem.

\begin{cor}[Local convergence of the discrete Newton method]
 Under the assumptions of \cref{thm:fem_approx}, let $(u_h,m_h) \in X_h$ be a solution to
 \begin{equation}\label{eq:mfg_discrete}
  F_h(u_h,m_h) = 0.
 \end{equation}
 Then there exists a neighborhood $\mathcal{O}$ of $(u_h,m_h)$ such that, if $(u_h^0,m_h^0) \in \mathcal{O}$, then the sequence $(u_h^k,m_h^k)$ given by Newton's method applied to \eqref{eq:mfg_discrete}, \emph{i.e.},
 \begin{equation}\label{eq:Newton_discrete}
    (u_h^{k+1},m_h^{k+1}) + T_h \left (G(u_h^k,m_h^k) + dG[u_h^k,m_h^k](u_h^{k+1}-u_h^k,m_h^{k+1} - m_h^k) \right ) = 0,
   \end{equation}
converges quadratically to $(u_h,m_h)$ in $X_h$.
\end{cor}

\begin{proof}
 Since $X_h \subset X$ is finite dimensional, and since $dF_h[u_h,m_h] \in \cL(X_h)$ is injective on $X_h$ by \cref{thm:fem_approx}-\ref{item:fem_approx_isom}, we have that $dF_h[u_h,m_h]$ is also an isomorphism on $X_h$. We conclude using standard results on Newton's method (see \cref{thm:newton_general} below).
\end{proof}

\begin{rem}
 Relation \eqref{eq:Newton_discrete} amounts to finding $(v_h, \rho_h) \in X_h$ such that
 \begin{align}
  \int D v_h \cdot D\phi + Du_h^k \cdot Dv_h \phi + \lambda v_h \phi \ dx & = \int f'(m_h^k)(\rho_h - m_h^k)\phi + \module{Du_h^k}^2 \phi \ dx, \label{eq:newton_discrete_1}\\
  \int D \rho_h \cdot D \psi + \rho_h Du_h^k \cdot D\psi + \lambda \rho_h \psi\ dx &  = \int m_h^k \left (Du_h^k - Dv_h \right ) \cdot D\psi \ dx, \label{eq:newton_discrete_2}
 \end{align}
for every $(\phi,\psi) \in X_h$.
\end{rem}

\subsection{Newton's method}

We recall here the convergence results Newton's method, see \cite[Corollary 2.1 p.120]{HPUU2009}, \cite[Proposition 5.1]{Z1993}, and \cite[Theorem 6E.2]{DR2014} for instance.

\begin{thm}[Classical Newton method]\label{thm:newton_general}
 Let $X$ and $Y$ be Banach spaces and $F \colon X \to Y$ be continuously differentiable. Let $\bar x \in X$ be such that $F(\bar x) = 0$ and $dF[\bar x] \in \cL(X,Y)$ is an isomorphism. Then there exists a neighborhood $\mathcal{O}$ of $\bar x$ in $X$ such that the sequence defined by
 \begin{equation}
  \begin{cases}
   x_0 \in \mathcal{O}, \\
   F(x_k) + dF[x_k](x_{k+1} - x_k) = 0,
  \end{cases}
 \end{equation}
 converges superlinearly to $\bar x$. Furthermore, if $dF$ is locally Lipschitz continuous in $\cL(X,Y)$, then the convergence is quadratic.
\end{thm}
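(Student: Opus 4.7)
The plan is to follow the classical Kantorovich-type argument for Newton's method in Banach spaces. The first step is to exploit the openness of the set of invertible operators in $\cL(X,Y)$: since $dF \colon X \to \cL(X,Y)$ is continuous and $dF[\bar x]$ is an isomorphism, the Banach lemma on perturbation of invertibles yields $r > 0$ and $M > 0$ such that $dF[x]$ is an isomorphism with $\| dF[x]^{-1} \|_{\cL(Y,X)} \leq M$ for every $x \in B(\bar x, r)$. In particular, the Newton iteration $x_{k+1} = x_k - dF[x_k]^{-1} F(x_k)$ is well defined whenever $x_k$ lies in this ball.

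The next step is the standard error identity. Using $F(\bar x) = 0$ and the fundamental theorem of calculus applied to $t \mapsto F(x_k + t(\bar x - x_k))$, I obtain
\begin{equation*}
  x_{k+1} - \bar x = dF[x_k]^{-1} \bigl( F(\bar x) - F(x_k) - dF[x_k](\bar x - x_k) \bigr) = dF[x_k]^{-1} \int_0^1 \bigl( dF[x_k + t(\bar x - x_k)] - dF[x_k] \bigr)(\bar x - x_k) \, dt,
\end{equation*}
and hence, setting $\omega(x) := \sup_{t \in [0,1]} \| dF[x + t(\bar x - x)] - dF[x] \|_{\cL(X,Y)}$,
\begin{equation*}
  \| x_{k+1} - \bar x \|_X \leq M \, \omega(x_k) \, \| x_k - \bar x \|_X.
\end{equation*}

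Both conclusions now follow from this recursion. For the superlinear statement, continuity of $dF$ at $\bar x$ gives $\omega(x) \to 0$ as $x \to \bar x$; I would then choose $\rho \in (0, r)$ so small that $M \omega(x) \leq 1/2$ on $B(\bar x, \rho)$ and set $\mathcal{O} := B(\bar x, \rho)$. An induction shows that $x_k \in \mathcal O$ for every $k$, with $\| x_{k+1} - \bar x \|_X \leq \tfrac12 \| x_k - \bar x \|_X$, so $x_k \to \bar x$, and the ratio $\| x_{k+1} - \bar x \|_X / \| x_k - \bar x \|_X \leq M \omega(x_k) \to 0$ gives superlinear convergence. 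When $dF$ is locally Lipschitz in $\cL(X,Y)$ with constant $L$ on $B(\bar x, r)$, we have $\omega(x) \leq L \| x - \bar x \|_X$, and the bound sharpens to $\| x_{k+1} - \bar x \|_X \leq M L \| x_k - \bar x \|_X^2$, which is the quadratic rate. The only delicate issue is the bootstrap argument ensuring the iterates never leave the neighborhood where $dF[\cdot]^{-1}$ is uniformly bounded, but this is handled automatically by the half-contraction obtained from the choice of $\rho$.
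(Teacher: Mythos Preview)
Your argument is correct and is precisely the classical Kantorovich-type proof. Note that the paper does not actually prove \cref{thm:newton_general}: it is stated as a recalled result with references to \cite{HPUU2009,Z1993,DR2014}, so there is no ``paper's own proof'' to compare against. Your write-up supplies exactly the standard argument those references contain.
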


Let $0 < \gamma < \beta < \alpha$, where $\alpha$ is given in \eqref{h:m0}, and set
\[
 X = C^{2,\beta}(\TT^d) \times C^{0,\gamma}(\TT^d) \quad \textnormal{and} \quad Z = C^{0,\alpha}(\TT^d) \times C^{0,\beta}(\TT^d)
\]
so that \ref{H:banach} is satisfied and consider the mapping $T,\,G$, and $F$ defined by \eqref{eq:T}, \eqref{eq:G}, and \eqref{eq:mfg_abstract}, respectively. From \cref{prop:F_diff_holder} we know that the mapping $F$ is continuously differentiable with $dF$ given by \eqref{eq:diff_F}. Moreover, if we fix a stable solution $(u,m) \in C^{2,\alpha}(\TT^d) \times C^{2,\alpha}(\TT^d)$, then we know from \cref{ex:holder} that $dF[u,m]$ is an isomorphism on $X$.

A direct application of \cref{thm:newton_general} yields the following theorem.

\begin{thm}\label{thm:newton_1}
   Consider $f \in C_b^{1}(\RR) \cap C^{2}(\RR)$, let $0 < \gamma < \beta < \alpha$ and  $(u,m)\in C^{2,\alpha}(\TT^d) \times C^{2,\alpha}(\TT^d)$ be a stable solution to \eqref{eq:mfg}. Then there exists a neighborhood $\mathcal{O}$ of $(u,m)$ in $C^{2,\beta}(\TT^d) \times C^{2,\gamma}(\TT^d)$ such that, if $(u_0,m_0) \in \mathcal{O}$, then the sequence $(u_k,m_k)$ generated by Newton's method applied to \eqref{eq:mfg_abstract}, \textit{i.e.},
   \begin{equation}\label{eq:Newton_continuous}
    (u_{k+1},m_{k+1}) + T \left (G(u_k,m_k) + dG[u_k,m_k](u_{k+1}-u_k,m_{k+1} - m_k) \right ) = 0,
   \end{equation}
   converges super-linearly to $(u,m)$ in $C^{2,\beta}(\TT^d) \times C^{2,\gamma}(\TT^d)$. Furthermore, if we also assume $f \in C_{\loc}^{2,1}(\RR)$, then the convergence is quadratic.
\end{thm}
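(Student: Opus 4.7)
The plan is to verify the hypotheses of the abstract Newton convergence result, \cref{thm:newton_general}, applied to the mapping $F$ from \eqref{eq:mfg_abstract} considered as $F\colon X\to X$ with $X = C^{2,\beta}(\TT^d) \times C^{2,\gamma}(\TT^d)$. Since most of the work has been done in previous sections, the proof amounts to a checklist rather than a fresh argument; no new estimate is needed.

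First I would recall from \cref{ex:holder} that with this choice of $X$ together with $Z = C^{0,\alpha}(\TT^d) \times C^{0,\beta}(\TT^d)$, the assumption~\ref{H:banach} holds, so that $T\in\cL(Z,X)$ and $G\colon X \to Z$ are both well-defined, and $F = I + T\circ G$ maps $X$ into $X$. Under the assumption $f \in C^1_b(\RR)\cap C^2(\RR)$, \cref{prop:F_diff_holder} yields that $F$ is continuously Fréchet differentiable on $X$, with differential given by \eqref{eq:diff_F}.

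Next I would verify the isomorphism property at the stable solution $(u,m)$. Since $(u,m)\in C^{2,\alpha}(\TT^d) \times C^{2,\alpha}(\TT^d)\subset X$, and since the compact embedding $Y = C^{2,\alpha}(\TT^d)\times C^{2,\beta}(\TT^d)\hookrightarrow X$ holds by the Arzelà--Ascoli theorem (with $T\in\cL(Z,Y)$ by Schauder estimates, as noted in \cref{ex:holder}), \cref{thm:isom} applies and gives that $dF[u,m]\in\cL(X)$ is an isomorphism.

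With these two ingredients, \cref{thm:newton_general} applies directly: there exists a neighborhood $\mathcal O$ of $(u,m)$ in $X$ such that any initial guess $(u_0,m_0)\in\mathcal O$ produces a Newton sequence~\eqref{eq:Newton_continuous} that converges super-linearly to $(u,m)$ in $X$. For the quadratic rate it remains to upgrade continuity of $dF$ to local Lipschitz continuity in $\cL(X)$, which is precisely the last assertion of \cref{prop:F_diff_holder} under the additional assumption $f\in C^{2,1}_{\loc}(\RR)$. I do not foresee any genuine obstacle: all the analytical difficulties (the isomorphism character of the linearization, and the differentiability of the nonlinear Nemytskii-type terms in Hölder scales) have already been handled in \cref{section:Reformulation,section:differentiability}, and the final step is purely a bookkeeping invocation of the abstract theorem.
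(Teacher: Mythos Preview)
Your proposal is correct and follows exactly the paper's approach: the paper simply states that the theorem is ``a direct application of \cref{thm:newton_general}'' after recalling, in the paragraph preceding the statement, that \cref{prop:F_diff_holder} gives the continuous differentiability of $F$ on $X = C^{2,\beta}(\TT^d)\times C^{2,\gamma}(\TT^d)$ and that \cref{ex:holder} (via \cref{thm:isom}) gives the isomorphism property of $dF[u,m]$. Your write-up spells out precisely these two ingredients and the upgrade to quadratic convergence via the local Lipschitz assertion in \cref{prop:F_diff_holder}.
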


\begin{rem}
 At each iteration, the relation \eqref{eq:Newton_continuous} amounts to solving the linear system
 \begin{equation}\label{eq:Newton_continuous_pde}
  \begin{cases}
   - \Delta v + Du_k \cdot Dv + \lambda v = f'(m_k)(\rho - m_k) - \module{Du_k}^2 \quad & \textnormal{in } \TT^d, \\
   - \Delta \rho - \diver(\rho Du_k) + \lambda \rho = \diver(m_k Dv) - \diver(m_k Du_k) \quad & \textnormal{in } \TT^d.
  \end{cases}
 \end{equation}
\end{rem}

In the case where $f'\geq 0$, we may use \cref{thm:mon_isom} instead of \cref{ex:holder} to obtain a slightly better result than \cref{thm:newton_1}.

\begin{thm}\label{thm:newton_2}
   Let $f \in C_b^{1}(\RR) \cap C^{2}(\RR)$ with $f' \geq 0$. Let $(u,m)\in C^{2,\alpha}(\TT^d) \times C^{2,\alpha}(\TT^d)$ be a stable solution to \eqref{eq:mfg}. Then there exists a neighborhood $\mathcal{O}$ of of $(u,m)$ in $C^{2,\alpha}(\TT^d) \times C^{2,\alpha}(\TT^d)$ such that if $(u_0,m_0) \in \mathcal{O}$ then the sequence $(u_k,m_k)$ generated by Newton's method \eqref{eq:Newton_continuous} converges super-linearly to $(u,m)$ in $C^{2,\alpha}(\TT^d) \times C^{2,\alpha}(\TT^d)$. Furthermore, if we also assume $f \in C_{\loc}^{2,1}(\RR)$, then the convergence is quadratic.
\end{thm}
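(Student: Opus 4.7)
The plan is to apply the abstract Newton convergence result (Theorem \ref{thm:newton_general}) directly to the reformulated equation $F(u,m) = 0$ on the stronger space $X = C^{2,\alpha}(\TT^d) \times C^{2,\alpha}(\TT^d)$. Compared with the proof of Theorem \ref{thm:newton_1}, where the isomorphism property of $dF[u,m]$ is obtained via a compactness argument that requires a strict loss of regularity ($\gamma < \beta < \alpha$ so that $C^{2,\alpha}\times C^{2,\beta} \hookrightarrow C^{2,\beta}\times C^{2,\gamma}$ is compact), the monotonicity assumption $f'\geq 0$ lets me invoke Theorem \ref{thm:mon_isom} instead, which yields the isomorphism property on $C^{2,\alpha}(\TT^d) \times C^{2,\alpha}(\TT^d)$ itself without any loss of exponent.

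More concretely, I would set $X = C^{2,\alpha}(\TT^d) \times C^{2,\alpha}(\TT^d)$ and $Z = C^{0,\alpha}(\TT^d) \times C^{0,\alpha}(\TT^d)$. Since $C^2(\RR) \subset C^{1,1}_{\loc}(\RR)$, the hypotheses of Theorem \ref{thm:mon_isom} are met, giving both that assumption \ref{H:banach} holds in this setting and that $dF[u,m]$ is an isomorphism on $X$.

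Next I would verify that $F \colon X \to X$ is continuously differentiable in this Hölder setting. This is precisely Proposition \ref{prop:F_diff_holder} applied with $\gamma = \beta = \alpha$, which is allowed since the statement of that proposition only requires $0 < \gamma \leq \beta \leq \alpha$. Under the additional hypothesis $f \in C^{2,1}_{\loc}(\RR)$, the same proposition yields local Lipschitz continuity of $dF$ in $\cL(X)$.

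With these two ingredients in place, Theorem \ref{thm:newton_general} applies verbatim to $F$ at the point $(u,m)$, producing a neighborhood $\mathcal{O}$ of $(u,m)$ in $X$ on which the Newton iterates defined by \eqref{eq:Newton_continuous} converge super-linearly to $(u,m)$ in $C^{2,\alpha}(\TT^d) \times C^{2,\alpha}(\TT^d)$, and quadratically under the extra $C^{2,1}_{\loc}$ assumption on $f$. I do not anticipate a genuine obstacle: all the analytic work (well-posedness of the linearized system in $C^{2,\alpha}$, regularity of $G$, surjectivity via Leray--Schauder) has already been done in Theorem \ref{thm:mon_isom} and Proposition \ref{prop:F_diff_holder}; the proof amounts to observing that their combination supplies exactly the hypotheses needed by the abstract Newton theorem at this higher regularity level.
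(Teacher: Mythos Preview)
Your proposal is correct and matches the paper's approach essentially verbatim: the paper simply remarks that, when $f'\geq 0$, one may replace the use of \cref{ex:holder} by \cref{thm:mon_isom} to obtain the isomorphism property directly on $C^{2,\alpha}(\TT^d)\times C^{2,\alpha}(\TT^d)$, and then apply \cref{thm:newton_general} together with \cref{prop:F_diff_holder} (with $\gamma=\beta=\alpha$). Your observation that $C^2(\RR)\subset C^{1,1}_{\loc}(\RR)$ ensures the hypotheses of \cref{thm:mon_isom} is also correct.
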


Finally we may also set $X = W^{1,p}(\TT^d) \times L^q(\TT^d)$ and $Z = L^{p/2}(\TT^d) \times W^{-1,r'}(\TT^d)$, where $d < p,q < \infty$ and $r > 1$ is such that $1/r = 1/p + 1/q$ and large enough so that there is a compact embedding $W^{1,r}(\TT^d) \hookrightarrow L^q(\TT^d)$. Then, we may replace \cref{prop:F_diff_holder} and \cref{ex:holder} by \cref{prop:F_diff_sobolev} and \cref{ex:sobolev}, respectively, in the discussion above to obtain the convergence of Newton's method in Sobolev spaces. The point being that in this case the neighborhood for the initial guess is expected to be less restrictive.

\begin{thm}\label{thm:newton_3}
   Let $f \in C_b^{1}(\RR) \cap C^{1,1}_{\loc}(\RR)$, let $X$ and $Z$ be as above and $(u,m)\in C^{2,\alpha}(\TT^d) \times C^{2,\alpha}(\TT^d)$ be a stable solution to \eqref{eq:mfg}. Then there exists a neighborhood $\mathcal{O}$ of $(u,m)$ in $ W^{1,p}(\TT^d) \times L^q(\TT^d)$ such that, if $(u_0,m_0) \in \mathcal{O}$, then the sequence $(u_k,m_k)$ generated by Newton's method \eqref{eq:Newton_continuous} converges superlinearly to $(u,m)$ in $ W^{1,p}(\TT^d) \times L^q(\TT^d)$. Furthermore, if $f \in W^{2,\infty}(\RR)$, then the convergence is quadratic.
\end{thm}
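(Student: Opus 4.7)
The plan is to derive Theorem \ref{thm:newton_3} as a direct corollary of the abstract Newton convergence result \cref{thm:newton_general}, verifying its three hypotheses in the Sobolev setting $X = W^{1,p}(\TT^d) \times L^q(\TT^d)$, $Z = L^{p/2}(\TT^d) \times W^{-1,r}(\TT^d)$. First, I would note that \ref{H:banach} holds in this setting, as already checked in \cref{ex:sobolev} (with an appropriate choice of $Y$ providing the compact embedding needed for the Fredholm argument). In particular, the operators $T\colon Z\to X$ and $G\colon X\to Z$ from \eqref{eq:T}, \eqref{eq:G} are well defined so that the reformulation $F = I + T\circ G$ makes sense on $X$, and the stable solution $(u,m)\in C^{2,\alpha}(\TT^d)\times C^{2,\alpha}(\TT^d) \hookrightarrow X$ satisfies $F(u,m)=0$.

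Second, under the assumption $f \in C_b^1(\RR)\cap C^{1,1}_{\loc}(\RR)$ and the conditions on $p,q,r$ (which in particular give $q > p/2$, $d<p$, $1/r\geq 1/p+1/q$), \cref{prop:F_diff_sobolev} asserts that $F\colon X\to X$ is continuously differentiable, with $dF$ given by \eqref{eq:diff_F}. The third ingredient is the isomorphism property of $dF[u,m]$ on $X$. Since $(u,m)$ is stable, this follows from \cref{thm:isom}, combined with \cref{ex:sobolev} which precisely verifies the compactness hypothesis $T\in \cL(Z,Y)$ for $Y = W^{2,p/2}(\TT^d)\times W^{1,r}(\TT^d) \hookrightarrow X$ compactly. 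Consequently, the first conclusion of \cref{thm:newton_general} applies: there is a neighborhood $\mathcal{O}$ of $(u,m)$ in $X$ such that, for any starting point $(u_0,m_0)\in\mathcal{O}$, the Newton iterates \eqref{eq:Newton_continuous} are well defined and converge superlinearly to $(u,m)$ in $X$.

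Finally, to obtain the quadratic rate under $f\in W^{2,\infty}(\RR)$, the plan is to verify that $dG\colon X\to \cL(X,Z)$ is locally Lipschitz continuous, so that the same property is inherited by $dF = I + T\circ dG$ since $T$ is a bounded linear operator. The expression of $dG$ given in \eqref{eq:dG} decomposes into three pieces: the map $v\mapsto Du\cdot Dv$ coming from the quadratic Hamiltonian and the map $(u,m)\mapsto \diver(\rho Du)+\diver(mDv)$, both of which arise from continuous bilinear forms on $X$ and hence are automatically smooth with locally Lipschitz differentials; and the Nemytskii-type piece $\rho \mapsto f'(m)\rho$, whose Lipschitz dependence on $m\in L^q(\TT^d)$ must be established. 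The key estimate is that, since $f''\in L^\infty(\RR)$, one has $\|f'(m_1)-f'(m_2)\|_{L^\infty}\leq \|f''\|_{L^\infty}\|m_1-m_2\|_{L^\infty}$ pointwise on $\RR$, which by composition yields $\|f'(m_1)-f'(m_2)\|_{L^q} \leq \|f''\|_{L^\infty}\|m_1-m_2\|_{L^q}$ for $m_1,m_2\in L^q(\TT^d)$. Using Hölder's inequality with the relation $1/r\geq 1/p+1/q$ (so that products of elements of $W^{1,p}$ and $L^q$ lie continuously in suitable spaces in $Z$) one concludes that $(u,m)\mapsto dG[u,m]$ is locally Lipschitz from $X$ into $\cL(X,Z)$. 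The quadratic convergence then follows from the second part of \cref{thm:newton_general}.

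The main obstacle is the bookkeeping for this last Lipschitz estimate: one must carefully track the exponents $p,q,r$ and the dual space $W^{-1,r}$ to confirm that the bilinear divergence term and the Nemytskii term both map Lipschitz-continuously into the correct components of $Z$, in analogy with the Hölder computation performed in \cref{lem:diff_G_holder}. The rest of the proof is a straightforward invocation of the abstract results already assembled in the paper.
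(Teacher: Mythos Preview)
Your proposal is correct and follows exactly the approach the paper indicates: the paragraph preceding \cref{thm:newton_3} explicitly says to replace \cref{prop:F_diff_holder} and \cref{ex:holder} by \cref{prop:F_diff_sobolev} and \cref{ex:sobolev} in the argument for \cref{thm:newton_1}, which is precisely what you do. The paper does not spell out the local Lipschitz continuity of $dG$ under $f\in W^{2,\infty}(\RR)$ in the Sobolev setting either (it is simply asserted in the proof of \cref{thm:fem_approx}), so your identification of this as the only piece requiring bookkeeping, and your sketch of how to obtain it via the pointwise bound $|f'(m_1)-f'(m_2)|\le \|f''\|_{L^\infty}|m_1-m_2|$ together with H\"older's inequality, is in line with the paper's level of detail.
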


\appendices

\section{Proof of \cref{thm:well_posed}}
\label{section:well_posed}

 We are going to apply Schauder's fixed point theorem in $C^{0,\beta}(\TT^d)$ for some $\beta \in (0,\alpha]$ to be determined.

 Fix some $m \in C^{0,\beta}(\TT^d)$. Since $f$ is assumed to be Lipschitz continuous, we have that $f(m) \in C^{0,\beta}(\TT^d)$. From the standard theory of elliptic equation (see \cite[Theorem 15.12]{GT2001}, using the gradient bound in \cref{prop:estimates_mfg}, for the result with Dirichlet boundary conditions) we know that there exists a unique classical solution $u \in C^{2,\beta}(\TT^d)$ to
 \begin{equation}\label{eq:well_posed_1}
  - \Delta v + \frac{1}{2} \module{Dv}^2 + \lambda v = f(m) \quad \textnormal{in } \TT^d.
 \end{equation}
 Then from standard Schauder theory (see \cite[Corollary 6.3]{GT2001}) and \cref{lem:fokker_planck}, we also have a unique solution $\tilde m \in C^{2,\beta}(\TT^d)$ to
 \begin{equation}
   - \Delta \rho - \diver \left ( \rho Du \right ) + \lambda \rho = \lambda m_0 \quad \textnormal{in } \TT^d.
 \end{equation}
 This defines a mapping $\Phi \colon C^{0,\beta}(\TT^d) \to C^{0,\beta}(\TT^d)$ by setting $\Phi(m) = \tilde m$.

 We now prove that $\Phi$ is continuous. Let $(m_n)_{n \geq 0}$ be a sequence in $C^{0,\beta}(\TT^d)$ converging to some $m$ in $C^{0,\beta}(\TT^d)$. In particular, this sequence is bounded in $C^{0,\beta}(\TT^d)$. Using the fact that $f$ is Lipschitz continuous it follows that $f(m_n)$ is also bounded in $C^{0,\beta}(\TT^d)$. From the inequality \eqref{eq:u_lipschitz} in \cref{prop:estimates_mfg}, we have the existence of a positive constant $K$ such any solution classical solution $u_n$ to \eqref{eq:well_posed_1}, with $m$ replaced by $m_n$, satisfies
 \[
  \norm{Du_n}{L^\infty} \leq K
 \]
 and the constant $K$ depends on the right-hand side of \eqref{eq:well_posed_1} only through $\norm{f}{L^\infty}$ (and hence is independent of $n$). Then, from \cite[Theorem 13.6]{GT2001} we deduce that there exist constants $\gamma \in (0,1)$ and $\tilde K >0$, depending on $K$ and independent of $n$, such that
 \[
  \norm{Du_n}{C^{0,\gamma}} \leq \tilde K.
 \]
 Using Schauder estimates \cite[Corollary 6.3]{GT2001}, one has that the sequence $u_n$ of solutions to \eqref{eq:well_posed_1} associated to $m_n$ is bounded in $C^{2,\beta}(\TT^d)$ for $\beta = \min \{ \alpha, \gamma \}$. From the Arzela-Ascoli theorem, it admits a subsequence converging in $C^2(\TT^d)$ to a solution $u$ to \eqref{eq:well_posed_1} associated to $m$. Since this solution is unique the whole sequence must converge to this limit $u$. Then, using again Schauder estimates we also have that $\Phi(m_n)$ is bounded in $C^{2,\beta}(\TT^d)$ by a constant depending on $\tilde K$, and a similar argument shows that it must converge to $\Phi(m)$.

 We now claim the $\Phi(C^{0,\beta}(\TT^d))$ is bounded in $C^{0,\gamma}(\TT^d)$ for $\gamma \in (\beta,1)$. Indeed let us choose $1 < p < \infty$, depending only on $d$ and $\gamma$, such that $W^{1,p}(\TT^d) \hookrightarrow C^{0,\beta}(\TT^d)$. From the $W^{1,p}$ estimates \eqref{eq:W1p_estimate} in \cref{lem:fokker_planck}, we have that $\Phi(C^{0,\beta}(\TT^d))$ is bounded in $W^{1,p}(\TT^d)$ and therefore also in $C^{0,\gamma}(\TT^d)$. Using the Arzela-Ascoli theorem we deduce that $\Phi(C^{0,\beta}(\TT^d))$ is compact in $C^{0,\beta}(\TT^d)$.

 We can now apply Schauder's fixed point theorem \cite[Corollary 11.2]{GT2001} to obtain a classical solution to \eqref{eq:mfg}.

 The argument for uniqueness under the assumption that $f' \geq 0$ is a straightforward adaptation of the one introduced in \cite{LL2007}. We therefore turn to proof of uniqueness for large values of $\lambda$.
 
 Let $(u_1,m_1)$ and $(u_2, m_2)$ be two (classical) solutions to \eqref{eq:mfg} and set $v := u_1 - u_2$ and $\rho := m_1 - m_2$. Recall from \cref{prop:estimates_mfg} that there are positive constants $K$ and $M$, independent of $\lambda$, such that
 \[
  \max \left \{ \norm{D u_1}{L^\infty} , \norm{Du_2}{L^\infty} \right \} \leq K \quad \textnormal{and} \quad \max \left \{ \norm{m_1}{L^\infty} , \norm{m_2}{L^\infty} \right \} \leq 2 \norm{m_0}{L^\infty},
 \]
 if $\lambda > 2M$.
 Then, the pair $(v,\rho)$ satisfies 
 \begin{equation}\label{eq:uniqueness_lambda}
  \begin{cases}
   - \Delta v + \left ( D u_1 + Du_2 \right) \cdot Dv + \lambda v = f(m_1) - f(m_2) \quad & \textnormal{in } \TT^d, \\
   - \Delta \rho - \diver \left ( \rho Du_1 \right) + \lambda m = \diver \left (m_2 D v \right) \quad & \textnormal{in } \TT^d.
  \end{cases}
 \end{equation}
 Using $\rho$ as a test-function in the second equation in \eqref{eq:uniqueness_lambda} we get 
 \begin{align*}
  \int_{\TT^d} \module{D \rho}^2 + \lambda \module{\rho}^2 \, dx & \leq - \int_{\TT^d} \rho Du_1 \cdot D \rho + m_2 Dv \cdot D \rho \, dx \\
  & \leq \norm{D u_1}{L^\infty} \norm{\rho}{L^2} \norm{D\rho}{L^2} + \norm{m_2}{L^\infty} \norm{Dv}{L^2} \norm{D\rho}{L^2} \\
  & \leq K \norm{\rho}{L^2} \norm{D\rho}{L^2} + 2 \norm{m_0}{L^\infty} \norm{Dv}{L^2} \norm{D \rho}{L^2} \\
  & \leq \norm{D \rho}{L^2}^2 + \frac{K^2}{2} \norm{\rho}{L^2}^2 + 2 \norm{m_0}{L^\infty}^2 \norm{D v}{L^2}^2.
 \end{align*}
 Assuming, for instance, that $\lambda \geq 1 + \frac{1}{2} K^2$, we deduce that
 \[
  \norm{\rho}{L^2}^2 \leq 2 \norm{m_0}{L^\infty}^2 \norm{Dv}{L^2}^2.
 \]
 We then use $v$ as a test-function in the first equation in \eqref{eq:uniqueness_lambda} to obtain 
 \begin{align*}
  \int_{\TT^d} \module{Dv}^2 + \lambda \module{v}^2 \, dx & = \int_{\TT^d} \left ( f(m_1) - f(m_2) \right) v - v \left ( Du_1 + Du _2 \right) \cdot Dv \, dx \\
  & \leq \norm{f(m_1) - f(m_2)}{L^2} \norm{v}{L^2} + \norm{Du_1 + Du_2}{L^\infty} \norm{v}{L^2} \norm{Dv}{L^2} \\
  & \leq \norm{f'}{L^\infty} \norm{\rho}{L^2} \norm{v}{L^2} + 2K \norm{v}{L^2} \norm{Dv}{L^2} \\
  & \leq \left ( \sqrt{2} \norm{m_0}{L^\infty} \norm{f'}{L^\infty} + 2K \right) \norm{v}{L^2} \norm{Dv}{L^2} \\
  & \leq \norm{Dv}{L^2}^2 + \frac{\left ( \sqrt{2} \norm{m_0}{L^\infty} \norm{f'}{L^\infty} + 2K \right)^2}{4} \norm{v}{L^2}^2.
 \end{align*}
 It follows that $v = 0$ if $\lambda > \frac{\left ( \sqrt{2} \norm{m_0}{L^\infty} \norm{f'}{L^\infty} + 2K \right)^2}{4}$ and then, using \cref{lem:fokker_planck}-\ref{item:fp_uniqueness} for the second equation in \eqref{eq:uniqueness_lambda}, that $\rho =0$. This concludes the proof.
 
 \section{Proof of \cref{prop:isolated}}
 \label{section:isolated}
  The argument is adapted from \cite[Proposition 4.2]{BC2018}.  Assume that the conclusion is false, then there exists a sequence $(u_n,m_n)$ of classical solutions to \eqref{eq:mfg} converging to $(u,m)$ in $H^1(\TT^d) \times L^2(\TT^d)$. Note that we may assume that the convergence also holds in the almost everywhere sense. We then set
 \[
  \delta_n = \norm{(u_n,m_n) - (u,m)}{H^1 \times L^2}
 \]
 and
 \[
  v_n = \delta_n^{-1}(u_n - u), \quad \rho_n = \delta_n^{-1}(m_n - m),
 \]
 so that $\norm{(v_n, \rho_n)}{H^1 \times L^2} = 1$ for all $n \in \NN$.
 Then, for every $n$, the pair $(v_n,\rho_n)$ is a classical solution to
 \begin{equation}\label{eq:isolated_1}
  \begin{cases}
   -\Delta v_n + \lambda v_n = g_n \quad & \textnormal{in } \TT^d, \\
   -\Delta \rho_n + \lambda \rho_n = \diver(h_n) \quad & \textnormal{in } \TT^d,
  \end{cases}
 \end{equation}
 where
 \[
  g_n = \delta_n^{-1} \left ( f(m_n) - f(m) + \frac{1}{2} \module{Du}^2 - \frac{1}{2} \module{Du_n}^2 \right )
 \]
 and
 \[
  h_n = \delta_n^{-1} \left ( m_n Du_n - m Du \right ).
 \]
 We then notice that, for every $p \geq 1$,
 \begin{equation}\label{eq:isolated_gn}
 \begin{split}
  \norm{g_n}{L^p} & \leq \delta_n^{-1} \left ( \norm{f'}{L^\infty} \norm{m_n - m}{L^p} + K \norm{Du_n - Du}{L^p} \right ) \\
  & = \norm{f'}{L^\infty} \norm{\rho_n}{L^p} + K \norm{Dv_n}{L^p}
  \end{split}
 \end{equation}
 and
 \begin{equation}\label{eq:isolated_hn}
 \begin{split}
  \norm{h_n}{L^p} & \leq \delta_n^{-1} \left ( K \norm{m_n - m}{L^p} + \norm{m}{L^\infty} \norm{Du_n - Du}{L^p} \right) \\
  & =  K \norm{\rho_n}{L^p} + C \norm{Dv_n}{L^p},
  \end{split}
 \end{equation}
 where $K$ is given in \cref{prop:estimates_mfg} and $C$ is a constant independent of $n$, given by using \cref{lem:fokker_planck}-\ref{item:fp_DGNM}. Since $\norm{(v_n,\rho_n)}{H^1 \times L^2} = 1$ for every $n\geq 1$, we first obtain that there exists a constant $C_1 > 0$ such that
  \[
   \norm{h_n}{L^2} + \norm{g_n}{L^2} \leq C_1 \quad \textnormal{for all } n \geq 1.
  \]
  Using \eqref{eq:isolated_1} and elliptic regularity we deduce that there exists $\tilde C_1 > 0$ such that
  \[
   \norm{v_n}{H^2} + \norm{\rho_n}{H^1} \leq C_1 \quad \textnormal{for all } n \geq 1.
  \]
  Using \eqref{eq:isolated_gn}, \eqref{eq:isolated_hn} and Sobolev's inequality, there exists $C_2 > 0$ such that
  \[
   \norm{h_n}{L^{2d/(d-2)}} + \norm{g_n}{L^{2d/(d-2)}} \leq C_1 \quad \textnormal{for all } n \geq 1
  \]
  if $d > 2$ and
  \[
   \norm{h_n}{L^p} + \norm{g_n}{L^p}\leq C_1 \quad \textnormal{for all } n \geq 1
  \]
  otherwise. Using a bootstrap argument and Morrey's inequality we conclude that there exists $\bar C > 0$ and $\beta \in (0,1)$ such that
  \[
   \norm{v_n}{C^{1,\beta}} + \norm{\rho_n}{C^{0,\beta}} \leq \bar C \quad \textnormal{for all } n \geq 1.
  \]
  We may therefore extract a subsequence, still denoted by $(v_n, \rho_n)$, such that there exists $(v,\rho) \in C^1(\TT^d) \times C(\TT^d)$ so that
  \begin{equation}\label{eq:isolated_compactness}
   \begin{cases}
    v_n \to v \quad & \textnormal{in } C^1(\TT^d), \\
    \rho_n \to \rho \quad & \textnormal{in } C(\TT^d).
   \end{cases}
  \end{equation}
  Notice that Lebesgue's convergence theorem then implies that
  \begin{equation}\label{eq:isolated_norm_limit}
   \norm{(v,\rho)}{H^1 \times L^2} = \lim_{n \to \infty} \norm{(v_n,\rho_n)}{H^1 \times L^2} = 1.
  \end{equation}
 We now claim that the following facts hold
 \begin{equation}\label{eq:isolated_2}
  \begin{cases}
   \delta_n^{-1}(f(m_n) - f(m)) \xrightarrow{} f'(m)\rho \quad & \textnormal{in } L^2(\TT^d), \\
   (2\delta_n)^{-1}\left (\module{Du}^2 - \module{Du_n}^2 \right) \xrightarrow{} Du \cdot Dv \quad & \textnormal{in } L^2(\TT^d), \\
   \delta_n^{-1}(m_n Du_n - mDu) \xrightarrow{} \rho Du + m Dv \quad & \textnormal{in } L^{2}(\TT^d; \RR^d).
  \end{cases}
 \end{equation}
 Let us prove the first line of \eqref{eq:isolated_2}. Since $f \in C^1_b(\RR)$ we have that
 \begin{equation}\label{eq:isolated_claim_1}
  \delta_n^{-1}(f(m_n) - f(m)) = \rho_n \int_0^1 f'(\lambda m_n + (1-\lambda)m)\ d\lambda \quad \textnormal{in } \TT^d.
 \end{equation}
 We then decompose
 \begin{align}
  & \norm{\int_0^1 f'(\lambda m_n + (1-\lambda)m) \rho_n - f'(m)\rho \ d\lambda}{L^2} \nonumber \\ & \qquad \leq \norm{\rho_n \int_0^1 f'(\lambda m_n + (1-\lambda)m) - f'(m) \ d\lambda}{L^2} + \norm{f'(m)(\rho_n - \rho)}{L^2}. \label{eq:isolated_claim_2}
 \end{align}
 Since we assume that $f'$ is bounded, the convergence to zero of the terms in the right-hand side of the last inequality follows from Lebesgue's convergence theorem for the first one, and from the convergence of $\rho_n$ to $\rho$ in $L^2(\TT^d)$ for the second one. This proves the first line in \eqref{eq:isolated_2}.

For the second line of \eqref{eq:isolated_2}, we write
 \begin{align*}
  & \norm{(2\delta_n)^{-1}\left (\module{Du}^2 - \module{Du_n}^2 \right) - Du \cdot Dv}{L^2} = \frac{1}{2} \norm{(Du + Du_n)\cdot Dv_n - 2Du\cdot Dv}{L^2} \\
  & \qquad \leq \frac{1}{2} \norm{Du \cdot(Dv_n - Dv)}{L^2} + \frac{1}{2}\norm{Du_n \cdot Dv_n - Du \cdot Dv}{L^2} \\
  & \qquad \leq \frac{1}{2} \norm{Du}{L^\infty} \norm{Dv_n - Dv}{L^2} + \frac{1}{2}\norm{Du_n \cdot (Dv_n - Dv)}{L^2} + \frac{1}{2} \norm{ (Du_n - Du) \cdot Dv}{L^2} \\
  & \qquad \leq K \norm{Dv_n - Dv}{L^2}  + \frac{1}{2} \norm{ (Du_n - Du) \cdot Dv}{L^2} \\
  & \qquad \leq K \norm{Dv_n - Dv}{L^2}  + \frac{\bar C}{2} \norm{Du_n - Du}{L^2},
 \end{align*}
 The conclusion then follows from \eqref{eq:isolated_compactness} and the convergence of $u_n$ to $u$ in $H^1(\TT^d)$.

 For the third line of \eqref{eq:isolated_2}, we first notice that
   \[
    h_n = \delta_n^{-1}(m_n Du_n - mDu) = m_n Dv_n + \rho_n Du.
   \]
   It follows that
   \begin{align*}
    \norm{h_n - \rho Du + m Dv}{L^2} & \leq \norm{m_n(Dv_n - Dv)}{L^2} + \norm{(m_n - m)Dv}{L^2} + \norm{(\rho_n - \rho) Du}{L^2} \\
    & \leq \norm{m_n}{L^\infty} \norm{Dv_n - Dv}{L^2} + \norm{Dv}{L^\infty} \norm{m_n - m}{L^2} + K \norm{\rho_n - \rho}{L^2}
   \end{align*}
   and the conclusion follows from \eqref{eq:isolated_compactness} and the fact that the sequence $(m_n)_{n \in \NN}$ converges to $m$ in $L^2(\TT^d)$ and is bounded in $L^\infty(\TT^d)$ according to \cref{lem:fokker_planck}-\ref{item:fp_DGNM}.

   We can now pass to the limit in the weak formulation of \eqref{eq:isolated_1} to obtain that $(v,\rho) \in H^1(\TT^d) \times L^2(\TT^d)$ is a weak solution to \eqref{eq:mfg_lin}.
   According to \cref{lem:mfg_lin_reg}, $(v,\rho)$ is a classical solution and, since $(u,m)$ is assumed to be stable, we must have $(v,\rho) = (0,0)$. This contradicts the fact that $\norm{(v,\rho)}{H^1 \times L^2} = 1$ and concludes the proof.

\paragraph{Acknowledgement.} The authors would like to thank Y. A. P. Osborne and I. Smears (University College London) for pointing out an error in a preliminary version of this paper. In addition, we are also grateful to the anonymous referee for very relevent comments that helped us to improve the paper.

This work was partially supported by the ANR (Agence Nationale de la Recherche) through the COSS project ANR-22-CE40-0010 and the Centre Henri Lebesgue ANR-11-LABX-0020-01. The third author was partially supported by KAUST through the subaward agreement ORA-2021-CRG10-4674.6.

\printbibliography

\end{document}